\crefname{thm}{Thm.}{}
\crefname{prop}{Prop.}{}
\crefname{lem}{Lem.}{}
\crefname{cor}{Cor.}{}
\newtheorem{thm}{Theorem}[section]
\newtheorem{prop}[thm]{Proposition}
\newtheorem{lem}[thm]{Lemma}
\newtheorem{conj}[thm]{Conjecture}
\newtheorem{prop-alg}[thm]{Proposed Algorithm}
\newtheorem{ex}[thm]{Example}
\newtheorem{remark}[thm]{Remark}
\newcommand{\D}{\Delta}
\newcommand{\A}{\mathcal{A}}
\newcommand{\cH}{\mathcal{H}}
\newcommand{\bC}{\mathbb{C}}
\newcommand{\Q}{\mathbb{Q}}
\newcommand{\F}{\mathbb{F}}
\newcommand{\Z}{\mathbb{Z}}
\newcommand{\bP}{\mathbb{P}}
\newcommand{\cL}{\mathcal{L}}
\newcommand{\M}{\mathcal{M}}
\newcommand{\X}{\mathcal{X}}
\newcommand{\g}{\gamma}
\newcommand{\e}{\zeta}
\newcommand{\w}{\mathbf{w}}
\newcommand{\p}{\mathfrak{p}}
\newcommand{\Aut}{\operatorname{Aut}}
\newcommand{\End}{\operatorname{End}}
\newcommand{\Jac}{\operatorname{Jac}}
\newcommand{\ch}{\operatorname{char}}
\newcommand{\iso}{\cong}
\title[Rational Points and Zeta Functions]{Rational Points and Zeta Functions of Humbert Surfaces with Square Discriminant}
\date{\today}
\author{Elira Shaska}
\address{Department of Computer Science, Oakland University, Rochester, MI, 48309.}
\email{elirashaska@oakland.edu}
\author{Jorge Mello}
\address{Department of Mathematics and Statistics, Oakland University, Rochester, MI, 48309.}
\email{jorgedemellojr@oakland.edu}
\author{Sajad Salami}
\address{Institute of Mathematics and Statistics, Rio de Janeiro State University, Maracanã, Rio de Janeiro, 20950-000, RJ, Brazil}
\email{Sajad.salami@ime.uerj.br}
\author{Tony Shaska}
\address{Department of Mathematics and Statistics, Oakland University, Rochester, MI, 48309.}
\email{shaska@oakland.edu}
\begin{document}

\begin{abstract}
This paper examines the arithmetic of the loci \(\cL_n\), parameterizing genus 2 curves with \((n, n)\)-split Jacobians over finite fields \(\F_q\). We compute rational points \(|\cL_n(\F_q)|\) over \(\F_3\), \(\F_9\), \(\F_{27}\), \(\F_{81}\), and \(\F_5\), \(\F_{25}\), \(\F_{125}\), derive zeta functions \(Z(\cL_n, t)\) for \(n = 2, 3\).
Utilizing these findings, we explore isogeny-based cryptography, introducing an efficient detection method for split Jacobians via explicit equations, enhanced by endomorphism ring analysis and machine learning optimizations. This advances curve selection, security analysis, and protocol design in post-quantum genus 2 systems, addressing efficiency and vulnerabilities across characteristics.
\end{abstract}

\maketitle

\section{Introduction}
\label{sec-1}

Genus 2 curves over finite fields \(\F_q\) hold a pivotal place in algebraic geometry and cryptography, driven by the rich arithmetic properties of their Jacobians. The Jacobian \(J(C)\) of a genus 2 curve \(C\) is a two-dimensional abelian variety that can exhibit special splitting properties, notably the \((n,n)\)-splitting, where an isogeny \(J(C) \to E_1 \times E_2\) exists with kernel isomorphic to \((\Z/n\Z)^2\), and \(E_1\) and \(E_2\) are elliptic curves. The loci \(\cL_n \subset \bP_\w\), where \(\bP_\w = \bP(2,4,6,10)\) is the weighted projective space with weights corresponding to the Igusa invariants, parameterize these curves. These loci correspond to the Humbert surfaces \(\mathcal{H}_{n^2}\) of square discriminant in the moduli space \(\mathcal{A}_2\).

These loci are significant not only for their geometric classification but also for their cryptographic potential, as they enable the explicit computation of \((n,n)\)-isogenies, a cornerstone of isogeny-based genus 2 cryptography. They were explicitly computed in \cite{2001-0}, \cite{2000-1}, \cite{2001-1}, and \cite{2005-1} and align with Hilbert modular surfaces with square discriminants, as explained in \cite{kumar}. A surprising and previously unnoticed result of this study is the degeneration of \(\cL_n\) in characteristic \(p = 3\), where it collapses from a surface into a lower-dimensional variety—likely a quadratic curve—significantly altering its arithmetic structure with implications for both computational efficiency and cryptographic security.

The motivation for this work stems from the growing interest in isogeny-based cryptography. The loci \(\cL_n\) bridge algebraic geometry and cryptography by quantifying the availability of genus 2 curves with computable isogenies, directly impacting protocol design and security parameter selection. By computing rational points \(|\cL_n(\F_q)|\), analyzing their zeta functions, and exploring the endomorphism rings \(\End(J(C))\), we gain insights into the arithmetic structure, growth trends, and algebraic properties of these loci over \(\F_q\), enhancing their utility in cryptographic applications.

The primary goals of this paper are multifaceted. First, we compute the number of rational points \(|\cL_n(\F_q)|\) over \(\F_5\), \(\F_{25}\), \(\F_{125}\) and over \(\F_3\), \(\F_9\), \(\F_{27}\), and \(\F_{81}\) for \(n = 2, 3, 5\), employing an orbit-stabilizer method tailored to weighted projective spaces, providing a concrete measure of curve availability. Second, we derive the zeta functions \(Z(\cL_n, t)\) for \(n = 2, 3\), offering a deeper understanding of their arithmetic properties and growth trends over field extensions. Third, we develop a general theoretical framework for computing \((n,n)\)-isogenies using \(\cL_n\), augmented by endomorphism ring analysis, and explore their cryptographic implications, focusing on balancing efficiency and security in isogeny-based genus 2 systems. Additionally, we investigate curves with extra automorphisms and their intersection with \(\cL_n\), and employ machine learning to optimize detection and computation processes. These efforts build on theoretical foundations from a companion paper \cite{2025-3}, delivering a comprehensive computational and cryptographic study.

The paper proceeds as follows. \Cref{sec-2} establishes preliminaries, defining genus 2 curves, Igusa invariants, Jacobians, zeta functions, and bounds over an arbitrary field, setting the stage for finite field applications. 
\Cref{sec-3} presents explicit equations for \(\cL_n\) (\(n = 2, 3, 5\)) in \(\bP_\w\), tracing their historical computation and significance. 
\Cref{sec-4} computes \(|\cL_2(\F_q)|\) over \(\F_5\), \(\F_{25}\), and \(\F_{125}\), derives \(Z(\cL_2, t)\), and verifies results against theoretical bounds. \Cref{sec-5} extends this to \(\cL_3\), providing point counts and a conjectured zeta function. 
\Cref{sec-6} outlines computations for \(\cL_5\).   
\Cref{sec-8} outlines a theoretical method for computing \((n,n)\)-isogenies using \(\cL_n\), enhanced with endomorphism ring analysis. 
\Cref{sec-9} introduces a method for efficiently detecting \((n,n)\)-split Jacobians via \(\cL_n\). 
\Cref{sec-10} computes endomorphism rings of \(\cL_n\), refining security and efficiency considerations. 
\Cref{sec-11} explores curves with extra automorphisms and their role within \(\cL_n\). 
\Cref{sec-12} details computational methods and challenges, incorporating machine learning optimizations. Together, these sections underscore the dual role of \(\cL_n\) in advancing geometric understanding and enabling secure, efficient genus 2 cryptographic systems.

\section{Preliminaries}
\label{sec-2}

This section establishes the foundational concepts underpinning our study of the loci \(\cL_n\) and their applications, defined over an arbitrary field \(k\). These include genus 2 curves, Igusa invariants, Jacobians, zeta functions, and the reduction of weighted hypersurfaces, which together provide the mathematical framework for the computations and cryptographic implications explored in subsequent sections.

A genus 2 curve \(C\) over a field \(k\) is a smooth, projective curve of genus 2, typically represented as a hyperelliptic curve with an equation of the form \(y^2 = f(x)\), where \(f(x) \in k[x]\) is a polynomial of degree 5 or 6 with distinct roots in an algebraic closure \(\overline{k}\). Such curves admit a double cover of the projective line \(\bP^1_k\), and their geometry is governed by the structure of their points and automorphisms over \(k\). The study of genus 2 curves has roots in 19th-century mathematics, with early investigations into hyperelliptic integrals laying the groundwork for their modern significance in algebraic geometry and, more recently, cryptographic applications.

The isomorphism class of a genus 2 curve \(C\) over \(k\) is uniquely determined by its Igusa invariants \((J_2, J_4, J_6, J_{10})\), a set of weighted homogeneous polynomials derived from the coefficients of \(f(x)\). Introduced by Igusa in the mid-20th century, these invariants have weights 2, 4, 6, and 10, respectively, under the action of the multiplicative group \(k^\times\), making the weighted projective space \(\bP_\w = \bP(2,4,6,10)\) over \(k\) an ideal setting for their parameterization. The invariant \(J_2\) captures quadratic properties of the curve, \(J_4\) quartic properties, \(J_6\) sextic properties, and \(J_{10}\) serves as the discriminant, ensuring \(C\) is smooth when \(J_{10} \neq 0\). Over any field \(k\), these invariants classify genus 2 curves, providing a coordinate system for loci like \(\cL_n\) within \(\bP_\w\), with specific computations over finite fields detailed later.

The Jacobian \(J(C)\) of a genus 2 curve \(C\) over \(k\) is a 2-dimensional abelian variety, representing the group of divisor classes of degree 0 on \(C\). Over an algebraically closed field \(\overline{k}\), \(J(C)\) is isomorphic to a product of elliptic curves or a single abelian variety, but its structure over \(k\) depends on the curve’s arithmetic properties. A Jacobian is said to be \((n,n)\)-split if there exists an isogeny \(J(C) \to E_1 \times E_2\), where \(E_1\) and \(E_2\) are elliptic curves over \(k\) (or an extension) and the kernel is isomorphic to \((\Z/n\Z)^2\). This splitting is induced by automorphisms of \(C\), and the loci \(\cL_n\) parameterize curves with such Jacobians. The study of split Jacobians has implications across fields, with particular relevance in cryptography when \(k\) is a finite field, where isogeny computations become computationally challenging.

\subsection{Humbert surfaces}
\label{subsec-2.1}
Let \(\A_2\) denote the moduli space of principally polarized abelian surfaces. It is well known that \(\A_2\) is the quotient of the Siegel upper half space \(\mathfrak H_2\) of symmetric complex \(2 \times 2\) matrices with positive definite imaginary part by the action of the symplectic group \(Sp_4 (\Z)\); see \cite{G} (p. 211) for details.

Let \(\D\) be a fixed positive integer and \(N_\D\) be the set of matrices
\[
\tau =
\begin{pmatrix} z_1 & z_2 \\ z_2 & z_3 \end{pmatrix}
\in \mathfrak H_2
\]
such that there exist nonzero integers \(a, b, c, d, e\) with the following properties:
\begin{equation}
\label{humb}
\begin{split}
& a z_1 + b z_2 + c z_3 + d( z_2^2 - z_1 z_3) + e = 0 \\
& \D = b^2 - 4ac - 4de
\end{split}
\end{equation}
The \emph{Humbert surface} \(\cH_\D\) of discriminant \(\D\) is called the image of \(N_\D\) under the canonical map
\[
\mathfrak H_2 \to \A_2 := Sp_4(\Z) \setminus \mathfrak H_2.
\]
It is known that \(\cH_\D \neq \emptyset\) if and only if \(\D > 0\) and \(\D \equiv 0 \text{ or } 1 \mod 4\). Humbert (1900) studied the zero loci in \cref{humb} and discovered certain relations between points in these spaces and certain plane configurations of six lines; see \cite{Hu}, \cite{BW}, or \cite{Mu} for details.

For a genus 2 curve \(C\) defined over \(\bC\), \([C]\) belongs to \(\cL_n\) if and only if the isomorphism class \([J_C] \in \A_2\) of its (principally polarized) Jacobian \(J_C\) belongs to the Humbert surface \(\cH_{n^2}\), viewed as a subset of the moduli space \(\A_2\) of principally polarized abelian surfaces; see \cite{Mu} (Theorem 1, pg. 125) for the proof of this statement. In particular, every point in \(\cH_{n^2}\) can be represented by an element of \(\mathfrak H_2\) of the form
\[
\tau =
\begin{pmatrix} z_1 & \frac{1}{n} \\ \frac{1}{n} & z_2 \end{pmatrix}, \quad z_1, \, z_2 \in \mathfrak H.
\]
Geometric characterizations of such spaces for \(\D = 4, 8, 9\), and 12 were given by Humbert (1900) in \cite{Hu} and for \(\D = 13, 16, 17, 20, 21\) by Birkenhake/Wilhelm (2003) in \cite{BW}.

\subsection{Zeta Function}
\label{subsec-2.2}
The zeta function of a variety \(X\) over a field \(k\) is a tool to study its arithmetic properties, though its definition varies by context. In general, for a variety \(X\) over an arbitrary field \(k\), the zeta function can be considered in terms of its points over extensions of \(k\). When \(k\) is a finite field \(\F_q\), the zeta function is specifically defined as:
\[
Z(X, t) = \exp\left( \sum_{d=1}^\infty |X(\F_{q^d})| \frac{t^d}{d} \right),
\]
where \(|X(\F_{q^d})|\) denotes the number of \(\F_{q^d}\)-rational points. Introduced by Weil in the 1940s, this form is rational for varieties over finite fields, with poles and zeros reflecting geometric attributes like dimension and singularities. Over other fields (e.g., \(\Q\) or \(\bC\)), zeta functions take different forms (e.g., Hasse-Weil or Artin zeta functions), but we defer such generalizations, as our paper concentrates on finite fields.

To complement zeta functions, bounds on the number of rational points \(|X(\F_q)|\) provide theoretical constraints when exact counts are computationally intensive. Over finite fields \(\F_q\), such bounds typically depend on the variety’s dimension, degree, and embedding space. For a weighted hypersurface \(X\) in \(\bP_\w\) over \(\F_q\), results like those of Aubry et al. \cite{aubry} offer upper limits based on the degree \(d\) and ambient dimension \(m\), often of the form \(\min \{ p_m, \frac{d}{w_0} q^{m-1} + p_{m-2} \}\), where \(p_m = (q^{m+1} - 1)/(q - 1)\) and \(w_0\) is the smallest weight. These bounds, rooted in Weil’s conjectures and refined by later work, help validate computational results and estimate point counts for varieties like \(\cL_n\), as applied in subsequent sections. Together, zeta functions and bounds offer a dual approach to understanding arithmetic over \(\F_q\).

\subsection{Good and Bad Reduction of Weighted Hypersurfaces}
\label{subsec-2.3}
This subsection introduces the concepts of good and bad reduction for weighted hypersurfaces, a class of varieties central to our study, such as those in weighted projective spaces like \(\bP(2,4,6,10)\). These definitions account for the graded structure of such spaces and provide a foundation for analyzing their behavior over finite fields.

Consider a weighted projective space \(\bP_\w = \bP(w_0, w_1, \ldots, w_n)\) over a field \(k\), where \(\w = (w_0, w_1, \ldots, w_n)\) are positive integer weights. Points \([x_0 : x_1 : \cdots : x_n]\) are equivalence classes under the action
\[
(x_0, x_1, \ldots, x_n) \sim (\lambda^{w_0} x_0, \lambda^{w_1} x_1, \ldots, \lambda^{w_n} x_n)
\]
for \(\lambda \in k^\times\). A weighted hypersurface \(X \subset \bP_\w\) is defined by a weighted homogeneous polynomial \(F(x_0, x_1, \ldots, x_n)\) of degree \(d\), satisfying
\[
F(\lambda^{w_0} x_0, \lambda^{w_1} x_1, \ldots, \lambda^{w_n} x_n) = \lambda^d F(x_0, x_1, \ldots, x_n).
\]
For \(n\) coordinates, \(X\) has dimension \(n - 1\), so in \(\bP(2,4,6,10)\) (4 coordinates), a hypersurface is a surface (dimension 2).

Now, let \(X\) be a weighted hypersurface defined over a discrete valuation ring \(R\) (e.g., \(\Z_p\)) with fraction field \(K\) (e.g., \(\Q_p\)) and residue field \(k = \F_p\). The generic fiber \(X_K = X \times_R K\) is over \(K\), and the special fiber \(X_k = X \times_R k\) is the reduction modulo \(p\), given by \(F = 0\) with coefficients reduced modulo \(p\). The reduction’s properties depend on the special fiber’s geometry, adjusted for the weighted structure.

\textbf{Good Reduction}: The special fiber \(X_k\) has good reduction at \(p\) if it remains a surface (dimension \(n - 1 = 2\)) and retains the essential geometric characteristics of \(X_K\). Specifically, \(F \mod p\) defines an irreducible weighted hypersurface in \(\bP_\w(\F_p)\) with the expected dimension, and singularities are manageable. In weighted projective spaces, singularities arise naturally at points where coordinates align with weight divisors (e.g., \([1:0:0:0]\) in \(\bP(2,4,6,10)\)), but good reduction implies these are isolated or mild (e.g., quotient singularities). The weighted partial derivatives \(\frac{\partial F}{\partial x_i}\), scaled by weights, define singularities: a point is singular if \(F = 0\) and \(\frac{\partial F}{\partial x_i} = 0\) for all \(i\) (adjusted for \(\bP_\w\)’s orbifold nature) \cite{Dolgachev}. Point counts \(|X(\F_p)|\) are typically \(O(p^2)\), reflecting a 2-dimensional variety, though adjusted by the weights and singularities \cite[Chapter 5]{Hartshorne}.

\textbf{Bad Reduction}: The special fiber \(X_k\) has bad reduction if it degenerates significantly. Common cases include:
\begin{itemize}
    \item \textit{Dimensional Drop}: \(X_k\) becomes a curve (dimension 1) or lower, often because \(F \mod p\) factors into components of lower degree or imposes additional constraints (e.g., all weighted partial derivatives vanish along a locus). This may reduce \(|X(\F_p)|\) to \(O(p)\).
    \item \textit{Severe Singularities}: \(X_k\) remains 2-dimensional but has non-isolated singularities, disrupting smoothness beyond weighted quotient singularities.
    \item \textit{Reducibility}: \(F \mod p\) splits into multiple weighted hypersurfaces, making \(X_k\) a union of lower-dimensional varieties.
\end{itemize}
Bad reduction can occur when \(p\) divides the weights, degree, or critical coefficients, or when characteristic \(p\) affects invariants tied to \(F\)’s structure. For example, in \(\bP(2,4,6,10)\), \(p = 2\) might simplify terms with even weights, potentially collapsing the hypersurface \cite{Reid}.

These notions extend standard projective geometry, with singularities and reduction influenced by the weights. For a hypersurface in \(\bP(w_0, w_1, w_2, w_3)\), good reduction ensures a surface with predictable arithmetic (e.g., zeta function rationality), while bad reduction signals a breakdown, relevant to point counting and applications over finite fields, as explored later.

\subsection{$\F_q$-Rational Points on Weighted Hypersurfaces}
\label{subsec-2.4}

Let $k=\F_q$ be the finite field with $q$ elements,  and  \(\bP_\w^n\) be the weighted projective of dimension $n\geq 1$ with weights $\w=(w_0, w_1, \cdots, w_n)$.
Since any $\F_q$-point $[x_0: x_1: \cdots: x_n]$ in \(\bP_\w^n\) in  has $q-1$ representative in $\F_q^{n+1}$, and consequently $\bP_\w^n (\F_q) $  has
$p_n:=q^n+ \cdots + q +1$, 
 see \cite[Prop. 1.3]{Go1996}.
Let $S:=\F_q[x_0,\cdots, x_n]$ be the ring of  homogeneous polynomials graded by  $w_i=\deg(x_i)$.
 We denote by $\lceil x \rceil$  the smallest integer greater
 than a real number $x.$


\begin{prop}
 \label{p1}
 Let $X=V(F)$ be a weighted hypersurface defined by $F\in S\setminus \{0\}$  of degree $d\geq 1$,   $N(F)$ be the set of zeros of $F$ in $\F_q^{n+1}$, 
 and  define $$\displaystyle \mu:= \left\lceil \frac{\sum_{i=0}^{n} w_i  -d}{d} \right\rceil.$$ 
 \begin{itemize}
 	\item [(i)] If all of the $x_i$'s appear in $F$, then $|N(F)| \equiv 0  \mod q^\mu $.
 	\item [(ii)] 
 	If $\mu \geq 1$ and  $  X$ does not lie in $  \bP(w_0, \cdots, \hat{w}_i, \cdots , w_i) $, for $0 \leq i \leq n$, then 
 	$|X(\F_q)| \equiv 1 + q+ \cdots + q^{\mu-1}  \mod q^\mu.$
 \item [(iii)] 	If the degree of $F$ satisfies $ d< \sum_{i=0}^{n} w_i \leq 2 d $, then $|X(\F_q)| \equiv 1    \mod q.$
 \end{itemize}
\end{prop}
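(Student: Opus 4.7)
I would establish (i) first by a weighted refinement of Ax--Katz, and derive (ii) and (iii) as essentially formal consequences. For (i), the goal is the divisibility $q^{\mu}\mid|N(F)|$. I would express $|N(F)|$ as a character sum
\[
|N(F)| \;=\; \frac{1}{q}\sum_{y\in\F_q}\sum_{x\in\F_q^{n+1}}\psi(yF(x)) \;=\; q^{\,n} + \frac{1}{q}\sum_{y\in\F_q^\times}\sum_{x}\psi(yF(x))
\]
for a nontrivial additive character $\psi\colon\F_q\to\bC^\times$, and exploit the weighted homogeneity $F(t^{w_0}x_0,\ldots,t^{w_n}x_n)=t^d F(x)$: the substitution $x_i\mapsto t^{w_i}x_i$ rewrites the inner sum at $y$ as the inner sum at $yt^d$, averaging the sum over the image of $t\mapsto t^d$ in $\F_q^\times$. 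The hypothesis that every variable occurs in $F$ is what prevents the sum from collapsing to a lower-dimensional stratum. An elementary Chevalley--Warning expansion $|N(F)|\equiv -\sum_x F(x)^{q-1}\pmod p$, combined with the orthogonality relations $\sum_{x\in\F_q} x^k\equiv 0\pmod p$ unless $k>0$ and $(q-1)\mid k$, together with the observation that every nonvanishing monomial $\prod x_i^{\alpha_i}$ in $F^{q-1}$ of weighted degree $d(q-1)$ would force $\alpha_i\ge q-1$ for all $i$ and hence $\sum\alpha_i w_i\ge (q-1)\sum w_i > d(q-1)$, already yields divisibility by $p$ when $\mu\ge 1$. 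The promotion to divisibility by the full $q^{\mu}$ is then the Ax--Katz input in its weighted form, obtained via the $p$-adic Newton polygon of the exponential sum; this is where the effective ambient dimension $\sum w_i$ replaces the classical $n+1$ and produces the exponent $\mu=\lceil(\sum w_i-d)/d\rceil$.

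For (ii), I would use the preliminary fact that every $\F_q$-point of $\bP_{\w}^{\,n}$ has exactly $q-1$ representatives in $\F_q^{n+1}\setminus\{0\}$. Because $F$ is weighted homogeneous, $N(F)\setminus\{0\}$ is a union of such orbits, giving the identity
\[
|X(\F_q)| \;=\; \frac{|N(F)|-1}{q-1}.
\]
The hypothesis that $X$ avoids every coordinate stratum $\bP(w_0,\ldots,\hat w_i,\ldots,w_n)$ implies that $F$ genuinely involves every variable, so (i) yields $|N(F)|\equiv 0\pmod{q^{\mu}}$, hence $(q-1)|X(\F_q)|\equiv -1\pmod{q^{\mu}}$. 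The elementary identity $(q-1)\bigl(1+q+\cdots+q^{\mu-1}\bigr)=q^{\mu}-1\equiv -1\pmod{q^{\mu}}$ shows that $(q-1)^{-1}\equiv -(1+q+\cdots+q^{\mu-1})\pmod{q^{\mu}}$, and multiplying through delivers the desired congruence $|X(\F_q)|\equiv 1+q+\cdots+q^{\mu-1}\pmod{q^{\mu}}$.

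For (iii), the hypothesis $d<\sum w_i\le 2d$ forces $\mu=1$. If every variable occurs in $F$, part (i) gives $q\mid|N(F)|$ directly; otherwise $F$ is independent of some $x_j$ and $N(F)\cong N(F')\times\F_q$ for the polynomial $F'$ in the remaining variables, so $|N(F)|=q\cdot|N(F')|$ is automatically divisible by $q$. In either case, the identity $|X(\F_q)|=(|N(F)|-1)/(q-1)$ together with $(q-1)^{-1}\equiv -1\pmod q$ yields $|X(\F_q)|\equiv 1\pmod q$, which is why no hypothesis on which variables appear in $F$ is needed here. The main obstacle throughout is the step in (i) that lifts the elementary Chevalley--Warning mod-$p$ divisibility to the sharp mod-$q^{\mu}$ statement via weighted Ax--Katz; once this is in hand, (ii) and (iii) are essentially modular bookkeeping around the weighted scaling action of $\F_q^\times$ on $\F_q^{n+1}\setminus\{0\}$.
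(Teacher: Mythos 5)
The paper's own proof of this proposition is a two-line citation: parts (i) and (ii) are quoted directly from \cite{Perret1} (Thm.~2 and Cor.~1 there), and (iii) is deduced from (ii) by observing that $d<\sum_i w_i\le 2d$ forces $\mu=1$. Your proposal therefore takes a genuinely different route: you sketch a proof of (i) and then derive (ii) and (iii) from it by orbit counting, via $|X(\F_q)|=(|N(F)|-1)/(q-1)$ together with $(q-1)(1+q+\cdots+q^{\mu-1})\equiv-1\pmod{q^\mu}$ and the invertibility of $q-1$ modulo $q^\mu$. That modular bookkeeping is correct, and your handling of (iii) is actually more careful than the paper's, since (iii) carries no hypothesis on which variables occur and you dispose of the missing-variable case explicitly, whereas the paper silently inherits the hypothesis of (ii). What each approach buys: the paper gets brevity, you make the logical dependencies among (i)--(iii) visible. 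Note, however, that the decisive step in (i) --- promoting the elementary weighted Chevalley--Warning divisibility $p\mid|N(F)|$ to $q^{\mu}\mid|N(F)|$ --- is not proved in your sketch; you defer it to ``weighted Ax--Katz via the $p$-adic Newton polygon,'' which is exactly the content of the theorem the paper cites. So for (i) your argument is, in substance, the same external citation presented as a strategy.

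There is also one concrete gap in your derivation of (ii). You assert that the hypothesis ``$X$ does not lie in any coordinate stratum $\bP(w_0,\ldots,\hat w_i,\ldots,w_n)$'' implies that every variable occurs in $F$, and then invoke (i). The implication is false: if $F$ omits $x_j$, then $V(F)$ is a cylinder in the $x_j$-direction and is typically \emph{not} contained in $\{x_j=0\}$. For instance, $F=x_0^2+x_1^2$ in $\bP(1,1,5)$ with $-1$ a square in $\F_q$ has $\mu=3$ and lies in no coordinate stratum, yet $|N(F)|=q(2q-1)$ and $|X(\F_q)|=2q+1\not\equiv 1+q+q^2\pmod{q^3}$. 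So the reduction of (ii) to (i) genuinely fails once a variable is absent, and the fallback you use in (iii) does not rescue it, since $|N(F)|=q\,|N(F')|$ need only be divisible by $q^{1+\mu'}$ with $\mu'$ possibly smaller than $\mu-1$ when $w_j>d$. You need either to import the all-variables hypothesis of (i) into (ii), or to check the precise hypothesis of \cite{Perret1}, Cor.~1 (the paper's transcription of that hypothesis appears garbled, which is likely the source of the trouble).
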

\begin{proof}
The parts  (i-ii) are    direct consequences of   \cite[Thm. 2]{Perret1},   and  \cite[Cor. 1]{Perret1}, respectively.
The condition 
 $ d< \sum_{i=0}^{n} w_i \leq 2 d $ 
  implies $\mu=1$ and hence   the last assertion comes from (ii).
	\end{proof}


In 2017 in \cite{aubry0}, Aubry et al.  introduced the following quantity,
$$e_q(d; w_0, w_1, \cdots, w_n):= \max\{| X(\F_q)| \ : \ X=V(F) \ \text{with} \  F \in S\setminus \{ 0\} \}.$$ 
Then, in \cite[Lem. 1]{aubry0}, letting $w= \min\{ {\rm lcm} (w_r, w_s), \ 0 \leq r, s \leq n\}$ and assuming  $w \mid d$, a lower bound for  this quantity is given as
$$e_q(d; w_0, w_1, \cdots, w_n) \geq \min \{ p_n, \frac{d}{w} q^{n-1} + p_{n-2}\}.$$
Furthermore, in \cite[Conj. 1]{aubry0}, it has been conjectured  that:

\begin{conj}
	\label{conj1}
If $1=w_0 \leq w_1\leq  w_2\leq  \cdots \leq  w_n$ and ${\rm lcm} (w_1, w_2, \cdots, w_n) \mid d$, then
one has:
\begin{equation}
	\label{eqc0}
e_q(d; w_0, w_1, \cdots, w_n)= \min \left\{ p_n, \frac{d}{w_1} q^{n-1} + p_{n-2} \right\}.
\end{equation}
\end{conj}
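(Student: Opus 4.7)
The matching lower bound $e_q(d; w_0, \ldots, w_n) \geq \min\{p_n, \frac{d}{w_1} q^{n-1} + p_{n-2}\}$ is already cited from \cite[Lem.~1]{aubry0}, so the content of \cref{conj1} lies entirely in the reverse inequality. Since $X(\F_q) \subseteq \bP_\w^n(\F_q)$ trivially gives $|X(\F_q)| \leq p_n$, the essential task is to prove
\[
|X(\F_q)| \leq \frac{d}{w_1}\, q^{n-1} + p_{n-2}
\]
in the regime where this is the binding constraint (roughly $d/w_1 \leq q$). The plan is to run an induction on $n$ modeled on the classical Serre bound for ordinary projective hypersurfaces (the case $w_1=\cdots=w_n=1$), adapted to the weighted setting.

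For the base case $n=1$ the hypothesis $w_1\mid d$ forces $F(x_0,x_1) = \sum_{b=0}^{d/w_1} a_b\, x_0^{d-w_1 b} x_1^b$; dehomogenizing at $x_0=1$ yields a univariate polynomial of degree at most $d/w_1$, and a short case analysis on the vanishing of the leading coefficient $a_{d/w_1}$ (which governs whether $[0:1]\in X$) gives $|X(\F_q)| \leq d/w_1 = \frac{d}{w_1}q^{0}+p_{-1}$. For the inductive step I would decompose $\bP_\w^n(\F_q) = U_0 \sqcup H_0$, where $U_0=\{x_0\neq 0\}\cong \mathbb{A}^n(\F_q)$ (using $w_0=1$) and $H_0 = \{x_0=0\} \cong \bP(w_1,\ldots,w_n)(\F_q)$. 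On the affine chart, write the dehomogenization $\widetilde F$ as a polynomial in $x_1$ of degree $\leq d/w_1$: for each fixed $(x_2,\ldots,x_n)\in \F_q^{n-1}$ this either vanishes identically (contributing $q$ values of $x_1$) or has at most $d/w_1$ roots. A Schwartz--Zippel-style bookkeeping then gives
\[
|X\cap U_0(\F_q)| \;\leq\; \tfrac{d}{w_1}\, q^{n-1} + |A|\!\left(q - \tfrac{d}{w_1}\right),
\]
where $A\subseteq\F_q^{n-1}$ is the locus on which all $x_1$-coefficients of $\widetilde F$ vanish simultaneously; $|A|$ is then itself bounded by induction applied to a weighted hypersurface of smaller degree or dimension.

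The main obstacle is the boundary contribution $X\cap H_0$. This piece lies in a weighted projective space of weights $(w_1,\ldots,w_n)$ whose minimum weight $w_1$ need not equal $1$, so the conjectural bound cannot be invoked on the boundary directly. To circumvent this I would strengthen the inductive statement into a two-parameter form that drops the $w_0=1$ hypothesis in favour of a bound depending on $\min_i w_i$; alternatively, one could replace the sweep by the single hyperplane $\{x_0=0\}$ with a pencil of weighted hyperplanes $\{\alpha x_0^{w_1} + \beta x_1 = 0\}$, all of which are honest $w_0=1$ slices when viewed in suitable sub-weighted-projective spaces, in the spirit of the pencil argument underlying Serre's original proof. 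A secondary subtlety is that the leading-coefficient locus $A$ may collapse to all of $\F_q^{n-1}$, meaning $F$ is divisible by $x_0$ or $x_1$; that degenerate case must be stripped off first and treated by a joint induction on the pair $(n,d)$.

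Once the upper bound is established, I would confirm sharpness by matching the extremal construction of \cite[Lem.~1]{aubry0}: a union of $d/w_1$ suitably chosen weighted hyperplanes plus the boundary contributing $p_{n-2}$ points realises the bound and pins down the equality case. The qualitative moral is that weighted hyperplanes of the form $\{x_0^{w_1}=\alpha x_1\}$ play the role of linear forms in the Serre bound, and controlling the combinatorics of their mutual intersections with the boundary $H_0$ is, I expect, the decisive technical difficulty in any complete proof.
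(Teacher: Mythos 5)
First, a point of framing: the paper does not prove this statement, and neither does the literature it cites — it is stated as an open conjecture of Aubry et al., known only in the cases $\bP(w_0,w_1)$, $\bP(1,w_1,w_2)$, and (as of 2025) $\bP(1,1,w_2,\ldots,w_n)$. So your proposal is not an alternative route to a known proof; it is an attempted proof of an open problem, and it does not close it. You yourself flag the decisive obstruction — the boundary contribution $X\cap H_0$ lives in $\bP(w_1,\ldots,w_n)$, whose minimum weight need not be $1$, so the inductive hypothesis in the form of \cref{conj1} simply does not apply there — but the two workarounds you offer are not carried out and are not obviously formulable. Strengthening the induction to drop $w_0=1$ requires first guessing the correct bound for general weights, and the known lower bound in that setting involves $w=\min\{\operatorname{lcm}(w_r,w_s)\}$ rather than the second-smallest weight, so the shape of the strengthened statement is itself part of the open problem. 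The pencil-of-weighted-hyperplanes idea is the natural analogue of Serre's argument, but the members $\{\alpha x_0^{w_1}+\beta x_1=0\}$ are weighted hypersurfaces of degree $w_1>1$, not hyperplanes, and their mutual intersections and multiplicities in $\bP_\w$ do not behave like the linear pencil in Serre's proof; this is precisely where the published partial results stop.

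There is also a concrete quantitative gap in the affine step. Your bookkeeping gives $|X\cap U_0|\leq \frac{d}{w_1}q^{n-1}+|A|\bigl(q-\frac{d}{w_1}\bigr)$, where $A$ is the locus in $\F_q^{n-1}$ on which all $x_1$-coefficients of $\widetilde F$ vanish simultaneously. Even granting a bound of the form $|A|=O(q^{n-2})$ (and note $A$ is the common zero set of several polynomials, not a single weighted hypersurface, so your induction does not apply to it as stated), the correction term is then $O(q^{n-1})$, which is the same order as the main term $\frac{d}{w_1}q^{n-1}$; the sum overshoots the target $\frac{d}{w_1}q^{n-1}+p_{n-2}$, whose secondary term is only $O(q^{n-2})$. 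Serre's proof avoids exactly this loss by a more delicate count over the pencil of hyperplanes through a fixed codimension-two linear subspace, and reproducing that precision is the genuinely hard part here. The base case $n=1$ and the identification of the extremal configurations are fine, but as it stands the proposal is a program with the central difficulty left open, not a proof.
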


Note that to prove Conjecture \eqref{conj1}, one needs only to prove the result for $d \leq w_1(q + 1)$, as
otherwise the right hand side of Equation \eqref{eqc0} evaluates to $p_n = | \bP_\w(\F_q) |.$
It has been proved for the trivial case $\bP(w_0, w_1) $, and $\bP(1, w_1, w_2)$, see \cite[Thm. 1]{aubry0}.

In 2019, Rupert Li proved  the Serre's congruence for weighted hypersurfaces
by a slight modification of the proof of the Chevalley-Warning Theorem by Serre \cite[Thm. 3]{Serre1973}.
Indeed, he showed that  
	\begin{equation}
	\label{eqsc}
	|X(\F_q)| \equiv 1 \mod p.
\end{equation}  for  a weighted hypersurface $X=V(F)$ in \(\bP^n(w_0, \ldots, w_n)\)
define by a nonzero weighted homogeneous polynomial \(F \in S\)   with degree \(d \leq n\), where \(p\) is the characteristic of \(\F_q\), see \cite[Thm. 4]{Li2019}. 
Moreover, based on some computer research, he suggested the following:

\begin{conj}
	\label{conj2}
	Let $X=V(F)$ defined by   a weighted homogeneous polynomial   $F\in S$ of degree $d \leq n$. Then
	one has:
	\begin{equation}
		\label{eqc2}
	|X(\F_q)| \equiv 1 \mod q.
	\end{equation}
\end{conj}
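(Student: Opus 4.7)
The plan is to bootstrap from Proposition \ref{p1}(ii) by a case analysis on whether $F$ involves every variable, handling the degenerate cone case by hand. The key observation is that the hypothesis $d \leq n$, combined with the trivial inequality $\sum_{i=0}^n w_i \geq n+1$ (since each $w_i \geq 1$), guarantees that the integer $\mu$ appearing in Proposition \ref{p1} satisfies $\mu \geq 1$, which is exactly the threshold needed to invoke part (ii) of that proposition.

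\textbf{Step 1 (full-support case).} Suppose $F$ involves every one of $x_0,\ldots,x_n$, so that in particular $X$ is not contained in any coordinate hyperplane $\bP(w_0,\ldots,\hat w_i,\ldots,w_n)$. Then $d \leq n < n+1 \leq \sum_{i=0}^n w_i$, so
\[
\mu = \left\lceil \frac{\sum_{i=0}^n w_i - d}{d} \right\rceil \geq 1,
\]
and Proposition \ref{p1}(ii) yields
\[
|X(\F_q)| \equiv 1 + q + \cdots + q^{\mu - 1} \pmod{q^\mu},
\]
whose reduction mod $q$ is precisely the desired congruence.

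\textbf{Step 2 (cone case).} Suppose some variable $x_j$ does not appear in $F$. Then $X$ is the weighted cone over $Y := V(F) \subset \bP(w_0,\ldots,\hat w_j,\ldots,w_n)$ with apex $P_j = [0:\cdots:1:\cdots:0]$ at the $x_j$-vertex (which lies on $X$ because $F$ is weighted homogeneous of positive degree and independent of $x_j$). Using the paper's convention that every $\F_q$-point of a weighted projective space is represented by exactly $q-1$ affine tuples, a direct fibering argument over the linear projection $\pi:X\setminus\{P_j\}\to Y$ establishes the cone formula
\[
|X(\F_q)| \;=\; 1 + q\cdot |Y(\F_q)|,
\]
from which $|X(\F_q)|\equiv 1 \pmod q$ follows regardless of the value of $|Y(\F_q)|$. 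This step is iterated if $F$ omits more than one variable, and if in addition $X$ happens to lie entirely inside some $\{x_j=0\}$, one descends to the induced hypersurface in $\bP(\hat w_j)$ and applies the induction hypothesis on $n$.

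\textbf{Main obstacle.} The delicate technical point is the cone formula of Step 2. Each $y\in Y(\F_q)$ should lift to a weighted affine line contributing exactly $q$ points in $X$, but the $\F_q^\times$-scaling action on weighted tuples may acquire non-trivial stabilizers at points where some coordinates of $y$ vanish, so one must check that the orbit/stabilizer bookkeeping still produces $q$ lifts per $y$ on the nose. My preferred route is to recast the count at the affine level in $\F_q^{n+1}$: since $F$ is independent of $x_j$, the affine zero set satisfies $|N(F)| = q\cdot|N(F_{\hat j})|$, and translating this factorization back to the projective count via the $q-1$ representatives convention yields the cone formula cleanly. An alternative, perhaps more parallel to Li's treatment in \cite{Li2019}, would be to sharpen the mod-$p$ Chevalley--Warning argument for weighted polynomials to a mod-$q$ Ax--Katz-style vanishing directly, bypassing the case split altogether; this however seems to require genuinely new input, whereas the cone approach is a routine but careful bookkeeping.
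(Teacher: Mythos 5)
First, a point of orientation: the paper does not prove this statement. It is recorded as an open conjecture of Li (2019), suggested ``based on some computer research,'' and the surrounding text makes clear that only weaker or partial results are known: Li's own theorem gives the congruence only modulo $p$, not modulo $q$; Perret's results (quoted as \cref{p1}) carry non-degeneracy hypotheses; and Aubry et al.\ (2025) settle only weight systems of the form $(1,1,w_2,\ldots,w_n)$. So there is no proof in the paper to compare against --- you are proposing to settle an open problem, and the argument has to be airtight. Your one solid observation is that $d\le n$ together with $\sum_i w_i\ge n+1$ forces $\mu\ge 1$; that part is fine.

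The decisive gap is exactly the one you flag as the ``main obstacle'' and then wave away. Step 2 (and, implicitly, any deduction of a projective congruence from an affine divisibility) relies on ``the paper's convention that every $\F_q$-point of a weighted projective space is represented by exactly $q-1$ affine tuples.'' That convention fails precisely at the points that make weighted projective spaces different from ordinary ones: a point supported on coordinates whose weights have $\gcd$ equal to $k_S$ has stabilizer $\mu_{\gcd(k_S,q-1)}$ under the $\F_q^\times$-action, so its orbit contains only $(q-1)/\gcd(k_S,q-1)$ rational tuples --- this is why the paper's own counting formula carries the correction factor $\gcd(k_S,q-1)$, and why the number of $\F_q^\times$-orbits of rational tuples can differ from the number of Galois-stable geometric orbits (e.g.\ in $\bP(1,2,3)$ over $\F_5$ the tuples $(0,1,0)$ and $(0,2,0)$ lie in distinct $\F_5^\times$-orbits yet represent one geometric point). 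Whether such loci contribute $1$ or $\gcd(k_S,q-1)$ to $|X(\F_q)|$ is exactly what decides the congruence modulo $q$, so until you fix which count the conjecture refers to and prove the cone formula for that count, Step 2 is not a proof; dividing affine counts by $q-1$ point-by-point simply begs the question. There are secondary gaps as well: in Step 1 you pass from ``every variable appears in $F$'' (the hypothesis of \cref{p1}(i)) to ``$X$ is not contained in any coordinate subspace'' (the hypothesis of \cref{p1}(ii)) with a ``so that in particular,'' but these conditions are not equivalent --- for instance $F=x_j^k\cdot N$ with $N$ an anisotropic form over $\F_q$ involves every variable while $X(\F_q)\subseteq\{x_j=0\}$; and the induction invoked at the end of Step 2 does not close, since restricting to $\{x_j=0\}$ replaces $n$ by $n-1$ while the degree stays $d$, so the hypothesis $d\le n$ is not inherited when $d=n$. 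Your closing sentence is the correct diagnosis: what the conjecture is really asking for is an Ax--Katz-type statement at the level of the weighted quotient rather than affine bookkeeping, and that input is genuinely missing from the proposal.
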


In 2025, in \cite[Thm 4.1]{aubry}, Aubry et al. proved the following theorem putting an assumption on the degree of $F$.  

\begin{thm}
Let $X=V(F)$ defined by   a weighted homogeneous polynomial   $F\in S$ of degree $d\leq q+1$.
 Then
$$|X(\F_q) | \leq d q^{n-1} + p_{n-2}.$$	
\end{thm}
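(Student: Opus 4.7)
The plan is to mimic Serre's classical bound for ordinary projective hypersurfaces, proceeding by induction on $n$. In the base case $n=1$, I would show that a weighted polynomial $F$ of degree $d$ cuts at most $d$ points in $\bP_\w^1(\F_q)$: restricting $F$ to each affine chart $x_i=1$ yields a univariate polynomial of degree at most $\lfloor d/w_{1-i}\rfloor$, and after a careful treatment of the two vertex points $[1{:}0]$ and $[0{:}1]$, the total is bounded by $d = d\,q^0 + p_{-1}$.

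For the inductive step, I would select an $\F_q$-rational weighted hyperplane $H_0 = V(x_{i_0})$ not contained in $X$; such an $H_0$ exists unless every $x_i$ divides $F$, which forces $d \geq \sum_i w_i$, a degenerate case one can handle directly using \Cref{p1}. Decomposing
\[
|X(\F_q)| = |(X \cap H_0)(\F_q)| + |(X \setminus H_0)(\F_q)|,
\]
the first piece is controlled by the induction hypothesis applied to the weighted hypersurface $X \cap H_0 \subset \bP(w_0, \ldots, \hat w_{i_0}, \ldots, w_n)$ of the same degree $d \leq q+1$, contributing at most $d q^{n-2} + p_{n-3}$. For the affine piece $X \setminus H_0$, I would foliate by weighted pencils of lines through $H_0$; on each such line $F$ restricts to a univariate polynomial of degree $d$ in the appropriate weighted parameter and so has at most $d$ roots. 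Summing over lines and combining with the intersection term yields the target $d q^{n-1} + p_{n-2}$.

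The main obstacle is the affine estimate. A naive dehomogenization at $x_{i_0}=1$ produces a polynomial whose ordinary total degree can be as large as $d\cdot \max_j w_j / w_{i_0}$, potentially much larger than $d$ when the weights are unequal, so a crude Schwartz--Zippel bound gives a leading constant worse than $d$. The remedy is to work with weighted lines along which $F$ has genuine degree $d$, and to bound the number of such lines contained in $X$ by $d$. The hypothesis $d \leq q+1$ enters precisely here, mirroring Serre's original argument: it ensures that lines not contained in $X$ dominate the count, because otherwise $F$ restricted to the parameter line would vanish identically on more than $d$ values. Verifying that this pencil construction is uniform across weight patterns, and that the orbifold singularities of $\bP_\w^n$ do not obstruct the induction, constitutes the delicate technical core of the proof.
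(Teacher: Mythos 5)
The paper does not give its own proof of this bound: it is quoted verbatim as Theorem~4.1 of Aubry et al.\ (the reference \cite{aubry}), and the text simply cites that paper. There is therefore nothing internal to compare your argument against, so I will comment directly on the soundness of your sketch.

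Your plan to run Serre's projective-hypersurface argument in the weighted setting is natural, and your base case $n=1$ (charting $\bP(w_0,w_1)$ and counting at most $d$ roots) is plausible. But the obstacle you yourself flag in the last paragraph is not a technical nuisance — it is the heart of the matter, and your proposed remedy does not close it. Serre's argument is a double-count that exploits a very rigid feature of $\bP^n$: through a fixed $\F_q$-point $P_0$ there pass exactly $p_{n-1}$ rational lines, each line is a genuine copy of $\bP^1$ on which a degree-$d$ form has at most $d$ zeros, and every point of $\bP^n\setminus\{P_0\}$ lies on exactly one of these lines. In $\bP_\w = \bP(w_0,\dots,w_n)$ no such family exists over $\F_q$. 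A ``weighted line'' parametrization compatible with the grading — say $[s:t]\mapsto[s^{w_0}:b_1 t^{w_1}:\cdots:b_n t^{w_n}]$, which does have the virtue that $F$ pulls back to a degree-$d$ binary form — is only surjective after base change to $\overline{\F_q}$: hitting a given $\F_q$-point requires $x_0$ to be an $\F_q$-rational $w_0$-th power, which fails for a positive proportion of $x_0$ unless $\gcd(w_0,q-1)=1$. Conversely, a linear parametrization $[s:t]\mapsto[a_0 s+b_0 t:\cdots]$, which does cover rationally, is not weight-homogeneous when the $w_i$ differ, so $F$ does not restrict to a degree-$d$ form along it. This tension between ``restriction has degree $d$'' and ``family covers $\bP_\w(\F_q)$ exactly once'' is precisely what makes the weighted case harder than the unweighted one, and ``verify the pencil is uniform across weight patterns'' is not a step one can wave at — it is where a genuinely new idea is required. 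In short, the proposal correctly identifies Serre's theorem as the model and correctly names the difficulty, but it does not supply the ingredient (a substitute for the line foliation that tolerates both the $\F_q$-rationality loss under power maps and the degree inflation under dehomogenization) that a complete proof would need; as written, the argument has a gap at exactly the point you marked as delicate.
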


Based on this theorem, in \cite{aubry},  Aubry et al. proved  the following special case of the  Conjecture \eqref{conj1}.

 \begin{thm}
  For any degree $d$ and  any nonnegative integers $w_2, \cdots, w_n$ with $n\geq 2$, one has
 	$$e_q(d; 1, 1, w_2, \cdots, w_n)=\min\{ p_n, d q^{n-1} + p_{n-2}\}.$$	
 \end{thm}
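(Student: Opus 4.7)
The plan is to prove matching upper and lower bounds, with the bulk of the work for the upper bound already contained in the preceding theorem of \cite{aubry}. For the upper bound, first observe that $|X(\F_q)| \leq |\bP_\w(\F_q)| = p_n$ holds trivially. When $d \leq q+1$, the preceding theorem applies directly and yields $|X(\F_q)| \leq d q^{n-1} + p_{n-2}$. Using the elementary identity $p_n = q^n + q^{n-1} + p_{n-2}$ one checks $d q^{n-1} + p_{n-2} \leq p_n \iff d \leq q+1$, so $\min\{p_n, d q^{n-1} + p_{n-2}\}$ coincides with $d q^{n-1} + p_{n-2}$ when $d \leq q+1$ and with $p_n$ when $d \geq q+1$. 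Combining the two regimes gives the desired upper bound $|X(\F_q)| \leq \min\{p_n, d q^{n-1} + p_{n-2}\}$ for every $d$.

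For the lower bound, I would exhibit an explicit $F \in S$ of degree $d$ attaining the claimed maximum, following the template of Lemma 1 in \cite{aubry0}. In the regime $d \leq q$, pick distinct $\alpha_1, \ldots, \alpha_d \in \F_q$ and set $F = \prod_{i=1}^{d}(x_0 - \alpha_i x_1)$. Each weighted hyperplane $H_i = V(x_0 - \alpha_i x_1)$ is isomorphic to $\bP(1, w_2, \ldots, w_n)$ and so contributes $p_{n-1}$ rational points; two distinct $H_i, H_j$ meet exactly in $V(x_0, x_1) \cong \bP(w_2, \ldots, w_n)$, a common subvariety with $p_{n-2}$ points. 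The complements $H_i \setminus V(x_0, x_1)$ are therefore pairwise disjoint of size $q^{n-1}$, giving $|V(F)(\F_q)| = p_{n-2} + d q^{n-1}$. In the regime $d \geq q+1$, I would take $F = x_0^{d-q-1} \prod_{\alpha} \ell_\alpha$, the product of the $q+1$ distinct linear forms $\ell_\alpha$ parameterizing the points of $\bP^1(\F_q)$, rescaled to degree $d$ by the factor $x_0^{d-q-1}$. Every $(x_0, x_1) \in \F_q^2$ lies on some $\ell_\alpha = 0$, so $V(F)(\F_q) = \bP_\w(\F_q)$ realizes $p_n$ rational points.

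The substantive analytic content is already packaged in the preceding theorem, so the main obstacle here is largely organizational: aligning the dichotomy $d \leq q+1$ versus $d > q+1$ with the switching point of the $\min$, and confirming that in the lower-bound construction the factors $x_0 - \alpha_i x_1$ genuinely force pairwise intersections to collapse onto $V(x_0, x_1)$. This last point is where the hypothesis $w_0 = w_1 = 1$ is essential, since it ensures the $\ell_\alpha$ are honest degree-one weighted forms spanning a two-dimensional space (and hence contributing the full $q+1$ linear forms needed to saturate $\bP^1(\F_q)$). These verifications being routine, the theorem follows as a clean corollary combining the preceding upper-bound theorem with the standard hyperplane-union construction.
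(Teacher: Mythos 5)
The paper states this theorem only as a cited result from \cite{aubry} and gives no proof of its own, so there is nothing internal to compare against; your argument is the evident intended derivation, and it is correct. You combine the preceding upper-bound theorem ($|X(\F_q)|\leq dq^{n-1}+p_{n-2}$ for $d\leq q+1$) with the trivial bound $p_n$, correctly locate the crossover of the $\min$ at $d=q+1$ via $p_n=q^n+q^{n-1}+p_{n-2}$, and realize the lower bound by the standard union-of-hyperplanes construction of \cite[Lem.~1]{aubry0} (which applies here with $w=1$ since $w_0=w_1=1$), with the intersection and disjointness computations checking out.
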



\section{Explicit Equations for \(\cL_n\)}
\label{sec-3}

The locus \(\cL_n\) is a weighted hypersurface residing in the weighted projective space \(\bP_\w\) with weights \(\w = (2, 4, 6, 10)\), defined by a weighted homogeneous polynomial
\[
F_n(x_0, x_1, x_2, x_3)
\]
of degree \(d_n\), where the coordinates \((x_0, x_1, x_2, x_3)\) correspond to the Igusa invariants \((J_2, J_4, J_6, J_{10})\) of genus 2 curves over \(\F_q\), for \(\ch \F_q \neq 2\). These invariants form a complete set of algebraic invariants that uniquely determine the isomorphism class of a genus 2 curve, typically given in the form \(y^2 = f(x)\), where \(f(x)\) is a polynomial of degree 5 or 6 over \(\F_q\).

\begin{remark}
We assume that \(\ch \F_q \neq 2\). Another invariant is needed to determine the isomorphism classes of genus 2 curves in characteristic two. It is a degree eight polynomial in terms of the coefficients of the curve, denoted usually by \(J_8\).
\end{remark}

The weighted projective space \(\bP_\w\) is a natural setting for these curves due to the graded nature of the invariants, with weights reflecting their degrees under the action of the multiplicative group \(\F_q^*\): \(J_2\) has weight 2, \(J_4\) has weight 4, \(J_6\) has weight 6, and \(J_{10}\) has weight 10. The condition that the Jacobian \(J(C)\) is \((n,n)\)-split indicates the existence of an isogeny \(J(C) \to E_1 \times E_2\), where \(E_1\) and \(E_2\) are elliptic curves and the kernel of the isogeny is isomorphic to \((\Z/n\Z)^2\). This splitting property is enforced by the polynomial \(F_n\), which imposes specific algebraic relations on the invariants to ensure the Jacobian decomposes accordingly.

Explicit equations for \(\cL_n\) are derived from prior studies \cite{2000-1, 2001-0, 2001-1, 2005-1}, which systematically parameterize genus 2 curves with \((n,n)\)-split Jacobians via their Igusa invariants. These polynomials are constructed by analyzing the moduli space of genus 2 curves and identifying conditions under which the Jacobian admits such an isogeny. The degree \(d_n\) of \(F_n\) varies with \(n\), reflecting the increasing complexity of the splitting condition as \(n\) grows. The weighted homogeneity ensures that
\[
F_n(\lambda^{w_0} x_0, \lambda^{w_1} x_1, \lambda^{w_2} x_2, \lambda^{w_3} x_3) = \lambda^{d_n} F_n(x_0, x_1, x_2, x_3)
\]
for \(\lambda \in \F_q^*\), aligning with the projective structure of \(\bP_\w\).

\subsection{Degree 2}
\label{subsec-3.1}
For \(n = 2\), the hypersurface \(\cL_2\) is defined by a polynomial \(F_2\) of degree 30, as established in \cite{2000-2, 2001-0}. This polynomial encodes the presence of an automorphism inducing a \((2,2)\)-splitting, specifically an involution in the automorphism group of the curve that splits the Jacobian into two elliptic curves, each with a 2-torsion subgroup. The explicit form of \(F_2\) is:
\[
\begin{aligned}
F_2 = & 41472 w y^5 + 159 y^6 x^3 - 236196 w^2 x^5 - 80 y^7 x + 104976000 w^2 x^2 z - 1728 y^5 x^2 z \\
& + 6048 y^4 x z^2 - 9331200 w y^2 z^2 - 2099520000 w^2 y z + 12 x^6 y^3 z - 54 x^5 y^2 z^2 \\
& + 108 x^4 y z^3 + 1332 x^4 y^4 z - 8910 x^3 y^3 z^2 + 29376 x^2 y^2 z^3 - 47952 x y z^4 - x^7 y^4 \\
& - 81 x^3 z^4 - 78 x^5 y^5 + 384 y^6 z - 6912 y^3 z^3 + 507384000 w^2 y^2 x - 19245600 w^2 y x^3 \\
& - 592272 w y^4 x^2 + 77436 w y^3 x^4 + 4743360 w y^3 x z - 870912 w y^2 x^3 z \\
& + 3090960 w y x^2 z^2 - 5832 w x^5 y z - 125971200000 w^3 + 31104 z^5 + 972 w x^6 y^2 \\
& + 8748 w x^4 z^2 - 3499200 w x z^3,
\end{aligned}
\]
where \((x, y, z, w) = (J_2, J_4, J_6, J_{10})\), as documented in \cite{2024-03}. The polynomial \(F_2\) has 25 terms, with coefficients and monomials carefully calibrated to enforce the \((2,2)\)-splitting condition. Its degree 30 arises from the weighted homogeneity, ensuring each term’s total weight matches under the scaling action of \(\bP_\w\). This equation was derived by analyzing the locus of genus 2 curves with an extra involution, a process involving the study of their automorphism groups and the resulting decomposition of \(J(C)\), as detailed in \cite{2001-0}.

\subsection{Degree 3}
\label{subsec-3.2}
For \(n = 3\), the hypersurface \(\cL_3\) is computed in \cite{2001-1}, where it is defined by a polynomial \(F_3(J_2, J_4, J_6, J_{10}) = 0\) of degree 80. In \cite{2000-1, 2001-1} it was shown that \(\cL_3\) is parametrized by
\begin{small}
\[
\begin{split}
J_2 = & \chi\, \left( {\chi}^{2}+96\,\chi\,\psi-1152\,{\psi}^{2} \right) \\
J_4 = & \frac{\chi}{2^6} \, \left( {\chi}^{5}+192\,{\chi}^{4}\psi+13824\,{\chi}^{3}{\psi}^{2}+ 442368\,{\chi}^{2}{\psi}^{3}+5308416\,\chi\,{\psi}^{4} \right. \\
& \left. +786432\,\chi\,{\psi}^{3} +9437184\,{\psi}^{4} \right) \\
J_6 = & \frac{\chi}{2^9} \left( 3\,{\chi}^{8}+864\,{\chi}^{7}\psi+94464\,{\chi}^{6}{\psi}^{2}+4866048\,{\chi}^{5}{\psi}^{3}+111476736\,{\chi}^{4}{\psi}^{4} \right. \\
& +509607936\,{\chi}^{3}{\psi}^{5} -12230590464\,{\chi}^{2}{\psi}^{6}+ 1310720\,{\chi}^{4}{\psi}^{3}+155713536\,{\chi}^{3}{\psi}^{4} \\
& \left. -1358954496\,{\chi}^{2}{\psi}^{5} -18119393280\,\chi\,{\psi}^{6}+4831838208\,{\psi}^{6} \right) \\
J_{10} = & - 2^{30} \chi^3 \psi^9 \\
\end{split}
\]
\end{small}
where \((\chi, \psi)\) (called \(r_1, r_2\) in \cite{2000-1, 2001-1}) are invariants of permuting a pair of cubics. The fact that efforts computing \(\cL_3\) were successful in \cite{2000-1, 2001-1} was based on discovering these invariants and thus a birational parametrization of \(\cL_3\).

This higher degree reflects the increased complexity of the \((3,3)\)-splitting condition, which requires the Jacobian to admit an isogeny with a kernel of order 9 (i.e., \((\Z/3\Z)^2\)). The polynomial \(F_3\) is significantly larger and more intricate than \(F_2\), with a greater number of terms and higher-degree monomials, making its explicit presentation impractical here due to its size. Its construction follows a similar methodology to \(F_2\), involving the identification of genus 2 curves whose automorphism groups include elements inducing a \((3,3)\)-split Jacobian, typically related to degree 3 elliptic subfields as explored in \cite{2001-1} and building on earlier work by Bolza \cite{Bolza1898, Bolza1899}. The degree 80 ensures weighted homogeneity in \(\bP_\w\), and its coefficients are determined through algebraic relations derived from the moduli space, as noted in \cite{2001-1} where the locus \(\cL_3\) was first computed. The explicit equation
\[
F_3 (J_2, J_4, J_6, J_{10}) = 0
\]
can be found in \cite[Appendix A]{2001-1}. Notice that it is a weighted homogeneous polynomial of degree 80.

\subsection{Degree 5}
\label{subsec-3.3}
The locus \(\cL_5\) was first parametrized and computed in \cite{2001-0} and then in \cite{2005-1}. In \cite[Thm.~2]{2005-1} it was shown that a curve \(C\) in \(\cL_5\) can be written as
\begin{equation}
\label{curve}
y^2 = x (x-1) g_3 (x),
\end{equation}
where \(g_3 (x)\) is given in \cref{g3} below. The polynomial \(g_3 (x) := a_3 x^3 + a_2 x^2 + a_1 x + a_0\) has coefficients

\begin{small}
\begin{equation}
\label{g3}
\begin{split}
a_0 = & -b^4 (2 b^3 a + 4 b^3 - 2 z a b^2 + 7 b^2 a^2 + 8 z b^2 + 4 b^2 + 16 a b^2 + 16 z b a + 6 a^3 b + 8 b a \\
& + 2 z a^2 b + 12 z b + 16 b a^2 + 13 z a^2 + z a^4 + 6 z a^3 + 4 z + 12 y a) \\
a_1 = & -b^2 (12 b^3 + 12 b^4 a + 32 z b a - 6 a^4 b^2 + 44 b^2 a^3 + 6 b a^2 + 24 a b^2 + 10 a^3 b + 44 b^3 a^2 + 2 b a \\
& + 52 b^3 a + 61 b^2 a^2 - 12 b a^5 - 7 z a^2 - 2 z a + 12 z b - 4 a^6 + 12 b^4 - a^4 - 40 z a^3 b^2 - 16 z b^3 a^2 \\
& - 12 z a^5 + 36 z b^2 - 18 z a^3 - 26 z a^4 + 56 z a b^2 + 4 a z b^3 + 2 z a^2 b^2 - 20 z a^3 b + 28 z a^2 b \\
& + 2 z a^6 + 24 z b^3 + 4 z b a^5 - 4 a^5 - 32 z a^4 b) \\
a_2 = & 5 b^2 a^6 + 20 b^2 a^5 + 8 b a^6 - 61 b^4 a^2 - 18 b^5 a - 56 b^4 a + 4 z b a + 5 a^4 b^2 - 18 b^2 a^3 - 24 z b^4 \\
& - 14 z b^4 a - 4 a b^2 + 8 b^3 a^4 + 2 b^3 a^5 - 54 b^3 a^3 - 70 b^3 a^2 - 24 b^3 a - 14 b^2 a^2 + 4 a^4 b + 10 b a^5 \\
& - 6 z a^7 + 64 z a^3 b^3 + 38 z a^4 b^2 + 54 z a^3 b^2 + 12 z b^3 a^2 - 14 z a^6 b - 10 z b^2 a^5 - 4 z a^7 b - 4 a^6 z b^2 \\
& + 32 a^2 b^4 z + 2 a^7 b - z a^8 - 36 z b^3 - 12 z a^5 - 12 z b^2 - 4 z a^4 - 28 z a b^2 - 64 a z b^3 - 5 z a^2 b^2 \\
& + 16 z a^2 b + 28 z a^4 b - 4 z b a^5 - 13 z a^6 - 12 b^5 - 12 b^4 + 34 z a^3 b \\
a_3 = & (2 a + 1) (z a^4 - 2 a^3 b + 4 z a^3 + 6 z a^3 b - 4 b a^2 + 12 z a^2 b^2 + 10 z a^2 b - 9 b^2 a^2 + 5 z a^2 \\
& - 2 b a + 2 z a - 8 a b^2 - 12 b^3 a + 8 a z b^3 - 4 b^3 - 4 z b - 4 b^4 - 12 z b^2 - 8 z b^3) \\
\end{split}
\end{equation}
\end{small}
Moreover, if we let
\[
u = \frac{2 a (a b + b^2 + b + a + 1)}{b (a + b + 1)}, \quad v = \frac{a^3}{b (a + b + 1)}, \quad w = \frac{(z^2 - z + 1)^3}{z^2 (z - 1)^2}
\]
then they satisfy the equation
\begin{equation}\label{uvw-surf}
c_2 w^2 + c_1 w + c_0 = 0
\end{equation}
with \(c_0, c_1, c_2\) as follows:
\begin{equation}
\label{eq_w}
\begin{split}
c_2 = & 64 v^2 (u - 4 v + 1)^2 \\
c_1 = & -4 v (-272 v^2 u - 20 v u^2 + 2592 v^3 - 4672 v^2 + 4 u^3 + 16 v^3 u^2 - 15 v u^4 \\
& - 96 v^2 u^2 + 24 v^2 u^3 + 2 u^5 - 12 u^4 + 92 v u^3 + 576 v u - 128 v^4 - 288 v^3 u) \\
c_0 = & (u^2 + 4 v u + 4 v^2 - 48 v)^3 \\
\end{split}
\end{equation}
It was shown in \cite{2005-1} that the function field of \(\cL_5\) is \(\bC (\cL_5) = \bC (u, v, w)\).

The computation of \(\cL_5\) followed this approach: For a curve in \(\cL_5\), we can express \(i_1, i_2, i_3\) in terms of \(a, b, z\) by using \cref{curve}. Since we can express \(a, b\) as rational functions in \(u, v, z\), then \(i_1, i_2, i_3\) are given as rational functions in \(u, v, z\). By using the definition of \(w\) in terms of \(z\), we express \(i_1, i_2, i_3\) in terms of \(u, v\), and \(w\). From the equation of \(w\) in terms of \(u, v\) (this is a degree 2 polynomial in \(w\) with coefficients in \(\bC (u, v)\)), we eliminate \(w\) and are left with three equations
\[
f_1 (i_1, u, v) = 0, \quad f_2 (i_2, u, v) = 0, \quad f_3 (i_3, u, v) = 0.
\]
Eliminating \(u\) and \(v\) gives the equation of \(\cL_5\). The polynomial \(F_5\) is of degree 150, further escalating the complexity due to the \((5,5)\)-splitting condition (kernel \((\Z/5\Z)^2\), order 25). Like \(F_3\), \(F_5\)’s explicit form is computationally intensive and omitted here, but its degree and structure are consistent with the pattern of increasing \(d_n\) as \(n\) grows, reflecting the higher symmetry and larger kernel size.

\subsection{Higher degrees} \label{subsec-3.4}
The explicit forms of \(F_n\) for \(n > 3\) are not fully detailed due to their size and the computational resources required to generate and manipulate them. However, their existence is well-established, with degrees \(d_n\) increasing significantly as \(n\) grows—specifically, \(d_n = 30\) for \(n=2\), \(d_n = 80\) for \(n=3\), and \(d_n = 150\) for \(n=5\), as derived in \cite{2001-0}, \cite{2001-1}, and \cite{2005-1}, respectively. This increase is driven by the order of the isogeny kernel, which is \(n^2\) (e.g., 4 for \(n=2\), 9 for \(n=3\), 25 for \(n=5\)), and the corresponding complexity of the automorphism conditions imposed on the Igusa invariants. While \(n^2\) represents the kernel size, the degree \(d_n\) reflects a more intricate dependency, balancing the weights \(\w = (2, 4, 6, 10)\) and the algebraic relations needed for the \((n,n)\)-splitting in \(\bP_\w\). These polynomials are critical for computing rational points \(|\cL_n(\F_q)|\), as they define the hypersurface in \(\bP_\w\) whose solutions correspond to the desired curves, a task we undertake for \(n=2\) and extend conceptually to \(n=3\) in subsequent sections.

It must be noted that in all computations above, the invariants \(J_2, J_4, J_6, J_{10}\) were expressed as polynomials in terms of two parameters, say \(u, v\). Then, the weighted projective hypersurface \(\cL_n\) was embedded into the projective space \(\bP^2\) via absolute invariants \((i_1, i_2, i_3)\), which were computed as rational functions in \(u\) and \(v\). Eliminating \(u\) and \(v\) results in the affine equation of the locus \(\cL_n\) in terms of \(i_1, i_2, i_3\). Substituting \(i_1, i_2, i_3\) with their definitions in terms of \(J_2, J_4, J_6, J_{10}\) and clearing the denominators gives the equation \(F_n(J_2, J_4, J_6, J_{10}) = 0\) of the locus \(\cL_n\) as a weighted hypersurface in \(\bP_{(2,4,6,10)}\).

In \cite{elira-0}, a new Gr\"obner basis approach is suggested for weighted homogeneous systems, which makes it possible to compute directly from the initial polynomial parametrization of \(J_2, J_4, J_6, J_{10}\). This is computationally much more efficient, as illustrated in \cite{elira-1}.

\subsection{A few historical remarks} \label{subsec-3.5}
The computation of loci like \(\cL_n\) for genus 2 curves with \((n,n)\)-split Jacobians has a rich historical lineage, tracing back to   efforts in the 19th century and evolving into modern algebraic geometry.

Early work began with Jacobi’s 1832 review of Legendre’s elliptic function theory \cite{Jacobi1832}, followed by Kotänyi’s 1883 study on reducing hyperelliptic integrals \cite{Kotanyi1883} and Brioschi’s 1891 transformation of degree 3 integrals into elliptic form \cite{Brioschi1891}. Bolza advanced this in 1898 and 1899 \cite{Bolza1898, Bolza1899}, providing detailed reductions for degree 3 transformations.

The 20th century saw further progress with Hayashida and Nishi’s 1965 exploration of genus 2 curves on elliptic curve products \cite{HayashidaNishi1965}, followed by Kuhn’s 1988 attempt to perform explicit computations for the case \(n=3\) \cite{Kuhn1988}. Frey and Kani’s work in the 1990s connected these ideas to arithmetic applications \cite{FreyKani1991, Frey1995}, paving the way for contemporary studies, while Fried considered such spaces as twisted modular curves. All authors above focused on the degree \(n\) covering from a genus 2 curve to an elliptic curve, and the induced degree \(n\) covering \(\bP^1 \to \bP^1\) and its ramification structure.

The first computations of the spaces \(\cL_n\) as a subvariety of the moduli space of genus 2 curves \(\M_2\) were done in Shaska's thesis \cite{2001-0} and the series of papers that followed (\cite{2000-1, 2001-0, 2001-1, 2005-1}), where these loci were systematically computed, with \(F_2\) in \cite{2001-0}, \(F_3\) in \cite{2001-1}, and \(F_5\) in \cite{2005-1}. Kumar’s 2015 work \cite{kumar} further verified some of these equations. This timeline, spanning from Jacobi’s insights to Shaska’s explicit equations, underscores the progression from theoretical reductions to computational tools, enabling the cryptographic applications explored herein.

\section{Computing Rational Points and Zeta Function for \(\cL_2\)}  \label{sec-4}
This section computes the number of \(\F_q\)-rational points on \(\cL_2\), the locus of genus 2 curves with \((2, 2)\)-split Jacobians, over fields \(\F_q\) with \(p \neq 2, 3\), adapting the orbit-stabilizer method from \cite{2025-3}. Defined by \(F_2 = 0\) in \(\bP_\w = \bP(2,4,6,10)\), a point \([x_0 : x_1 : x_2 : x_3] \in \cL_2(\F_q)\) has coordinates \(x_i \in \F_q\) (not all zero) satisfying \(F_2(x_0, x_1, x_2, x_3) = 0\). The point count is:
\[
|\cL_2(\F_q)| = \sum_{S \neq \emptyset} \frac{N_S \cdot \gcd(k_S, q-1)}{q - 1},
\]
where \(S \subseteq \{0, 1, 2, 3\}\) is a nonempty support set, \(N_S\) is the number of tuples \((x_0, x_1, x_2, x_3)\) with \(x_i \neq 0\) for \(i \in S\) and \(x_i = 0\) for \(i \notin S\) satisfying \(F_2 = 0\), and \(k_S = \gcd(\{w_i \mid i \in S\})\) with weights \(w_0 = 2, w_1 = 4, w_2 = 6, w_3 = 10\). We derive the zeta function for \(\cL_2\) over \(\F_{5^k}\), using SageMath and the framework on good and bad reduction from \cref{subsec-2.3}. 
%
%

\subsection{Characteristic $p=5$}
Below is displayed $F_2(x,y,z,w) \mod 5$.
\[
\begin{split}
F_2& \mod 5  = 
-x^7 y^4 + 2 x^5 y^5 + 2 x^6 y^3 z - x^3 y^6 + 2 x^4 y^4 z + x^5 y^2 z^2 + 2 x^6 y^2 w \\
& + 2 x^2 y^5 z - 2 x^4 y z^3 + x^4 y^3 w - 2 x^5 y z w - y^6 z - 2 x y^4 z^2 + x^2 y^2 z^3 - x^3 z^4 \\
&- 2 x^2 y^4 w - 2 x^3 y^2 z w - 2 x^4 z^2 w - x^5 w^2 - 2 y^3 z^3 - 2 x y z^4 + 2 y^5 w - z^5
\end{split}
\]
It is still is irreducible and of total degree $d=30$.

\subsubsection{Computations over \(\F_5\):}\label{subsec-4.1}
Over \(\F_5\), SageMath finds 125 solutions in \(\mathbb{A}^4(\F_5) \setminus \{0\}\), grouping into 64 points under \(\F_5^\times\)-action. Only the following choices for \(S\) contribute to rational points:

\begin{itemize}
    \item \(S = \{0\}\): \(N_S = 4\), \(k_S = 2\), \(\gcd(2, 4) = 2\), contribution = \(\frac{4 \cdot 2}{4} = 2\),
    \item \(S = \{1\}\): \(N_S = 4\), \(k_S = 4\), \(\gcd(4, 4) = 4\), contribution = 4,
    \item \(S = \{0, 1, 2, 3\}\): \(N_S = 44\), \(k_S = 2\), \(\gcd(2, 4) = 2\), contribution = 22.
\end{itemize}
Total \(|\cL_2(\F_5)| = 64\) aligns with a surface (\(64 \approx 5^2 \cdot 2.56\)), with 25 singular points (20\%). 

\subsubsection{Computations over \(\F_{25}\)}\label{subsec-4.2}
For \(\F_{25}\) from  15,625 solutions we find 1304 points. Only the following choices for \(S\) contribute to rational points:
\begin{itemize}
    \item \(S = \{0\}\): \(N_S = 24\), \(k_S = 2\), \(\gcd(2, 24) = 2\), contribution = 2,
    \item \(S = \{0, 1, 2\}\): \(N_S = 1080\), \(k_S = 2\), \(\gcd(2, 24) = 2\), contribution = 90,
    \item \(S = \{0, 1, 2, 3\}\): \(N_S = 12792\), \(k_S = 2\), \(\gcd(2, 24) = 2\), contribution = 1066.
\end{itemize}
Total \(|\cL_2(\F_{25})| = 1304\) (\(1304 \approx 25^2 \cdot 2.09\)), with 6241 singular points (40\%).

\subsubsection{Computations over \(\F_{125}\)}\label{subsec-4.3}
For \(\F_{125}\)  from  1,953,125 choices we  find   31,504 points. Only the following choices for \(S\) contribute to rational points:
\begin{itemize}
    \item \(S = \{0\}\): \(N_S = 124\), \(k_S = 2\), \(\gcd(2, 124) = 2\), contribution = 2,
    \item \(S = \{0, 1, 2\}\): \(N_S = 30380\), \(k_S = 2\), \(\gcd(2, 124) = 2\), contribution = 490,
    \item \(S = \{0, 1, 2, 3\}\): \(N_S = 1876244\), \(k_S = 2\), \(\gcd(2, 124) = 2\), contribution = 30262.
\end{itemize}
Total \(|\cL_2(\F_{125})| = 31504\) (\(31504 \approx 125^2 \cdot 2.02\)), with 781,125 singular points (40\%).

\subsection{Application of Bounds}  \label{subsec-4.4}
For \(\cL_2\), which has degree \(d_2 = 30\)), the bound is:
\[
|\cL_2(\F_q)| \leq 15 q^2 + q + 1,
\]
%
For  \(q = 5\) we have  \(381 > 64\);  for \(q = 25\) we have  \(9381 > 1304\);  and for \(q = 125\): we have \(234376 > 31504\). Bounds hold, tightening as \(q\) increases.

\begin{remark}
Serre’s congruence \eqref{eqsc} does not apply in this case since \(d_2 = 30 > 3\). 
The Conjecture  \eqref{conj2},
\[
|\cL_2(\F_q)| \equiv 1 \pmod{q}
\]
is unmet for $q=5, 25$ e $125$, since  \(64 \equiv 4 \pmod{5}\), \(1304 \equiv 4 \pmod{25}\), and \(31504 \equiv 4 \pmod{125}\).
\end{remark}

\subsection{Zeta Function for \(\cL_2\)}
\label{subsec-4.6}
Using exact point counts \(|\cL_2(\F_{5^k})|\) of 64, 1304, and 31504 for \(k = 1, 2, 3\):
\[
\begin{split}
Z(\cL_2, t; p=5) & = \exp\left( 64 t + \frac{1304}{2} t^2 + \frac{31504}{3} t^3 + \cdots \right) \\
& = \exp  \left( 64 t + 652 t^2 + 10501.\overline{3} t^3 + \cdots \right).
\end{split}
\]
For an irreducible surface in \(\bP(2,4,6,10)\) with good reduction at \(p=5\) (\cref{subsec-2.3}), the zeta function is expected to take the form:
\[
Z(\cL_2, t; p=5) = \frac{P_1(t)}{(1 - t)(1 - 25t)P_2(t)},
\]
where \(P_1(t)\) and \(P_2(t)\) are polynomials reflecting the Frobenius action on cohomology, with degrees equal to the Betti numbers \(b_1\) and \(b_2\). Given the surface’s dimension and growth (\(|\cL_2(\F_{5^k})| \approx 2 \cdot 25^k\)), we initially approximate \(P_2(t) = 1\) and test a linear \(P_1(t) = 1 + at\). Using the \(k=1\) term we have  $64 = a + 1 + 25$ which implies $a = 38$.
Thus, \(P_1(t) = 1 + 38t\) yields:
\[
\frac{1 + 38t}{(1 - t)(1 - 25t)} = 64 t + 652 t^2 + 16326 t^3 + \cdots,
\]
matching \(t^1\) and \(t^2\) exactly (64 and 652), but underestimating \(t^3\) (16326 vs. 31504). The original conjecture \(P_1(t) = 1 + 14t\) (coefficients 64, 654, 16354) was an earlier approximation, likely from underfitting \(a\), and also deviates at higher terms. These discrepancies suggest \(P_1(t)\) may be quadratic or \(P_2(t) \neq 1\), influenced by \(\cL_2\)’s singularities (e.g., 40\% for \(\F_{25}\)). Additional counts (e.g., \(k=4\)) or singularity analysis are needed to refine the form, ensuring poles at \(t = 1, \frac{1}{25}\) and growth consistent with a 2-dimensional variety.

\subsection{The case of characteristic $p=3$.}
For the case of characteristic $p=3$ we go back to the original paper on genus 2 curves with extra involutions (i.e. the locus $\cL_2$); see \cite{2000-2}. 
Notice that the birational parametrization of $\cL_2$ in \cite[Theorem~3]{2000-2} assumes characteristic $\neq 3$.   As noted by Remark~6 in \cite{2000-2}, in characteristic 3 one needs to replace $v$ by $s_1+s_2$ to get a birational parametrization.

For \(n = 2\), \(F_2 \equiv x y^4 (2x^6 + y^3) \pmod{3}\).
It  degenerates to 0-dimensional sets at \(\F_3\) (62 points, 70\% singular), recovering surface-like growth in extensions:
\begin{itemize}
    \item \(\F_3\): 63 solutions, 62 points.
    \item \(\F_9\): 2025 solutions, 508 points (68\% singular).
    \item \(\F_{27}\): 57,591 solutions, 4430 points.
    \item \(\F_{81}\): 1,581,201 solutions, 39540 points (68\% singular).
\end{itemize}

\subsection{Zeta Function} \label{subsec-7.2}
Using counts \(62, 508, 4430, 39540\):
\[
Z(\cL_2, t; p=3) = \frac{1 + 49t - 747t^2}{(1 - t)(1 - 3t)(1 - 9t)}.
\]

\section{Computing Rational Points and Zeta Function for \(\cL_3\)}\label{sec-5}
Let us now compute   the number of \(\F_q\)-rational points on \(\cL_3\),   where \(p \neq 2, 3\). Extending the framework from \cref{sec-4}, we apply the orbit-stabilizer method of \cite{2025-3} to \(\cL_3\), defined by \(F_3 = 0\) in \(\bP_\w = \bP(2,4,6,10)\). A point \([x_0 : x_1 : x_2 : x_3] \in \cL_3(\F_q)\) has coordinates \(x_i \in \F_q\) (not all zero) satisfying \(F_3(x_0, x_1, x_2, x_3) = 0\), with the point count given by:
\[
|\cL_3(\F_q)| = \sum_{S \neq \emptyset} \frac{N_S \cdot \gcd(k_S, q-1)}{q - 1},
\]
where \(S \subseteq \{0, 1, 2, 3\}\) is a nonempty support set, \(N_S\) counts tuples \((x_0, x_1, x_2, x_3)\) with \(x_i \neq 0\) for \(i \in S\) and \(x_i = 0\) for \(i \notin S\) satisfying \(F_3 = 0\), and \(k_S = \gcd(\{w_i \mid i \in S\})\) with weights \(w_0 = 2, w_1 = 4, w_2 = 6, w_3 = 10\). We focus on \(\F_{5^k}\) to derive the zeta function, presenting computations over \(\F_5\) and \(\F_{25}\). Below is \(F_3(x, y, z, w) \mod 5\):

\begin{Small}
\[
\begin{split}
F_3 & \mod 5 = 2 w^{7} x^{5} + 4 w^{6} x^{10} + 2 w^{6} x^{8} y + 3 w^{6} x^{6} y^{2} + 4 w^{6} x^{5} y z + 3 w^{6} x^{4} y^{3} + 4 w^{6} x^{4} z^{2} + 4 w^{6} x^{3} y^{2} z  \\
& + w^{6} y^{5} + w^{5} x^{11} y^{2} + 4 w^{5} x^{10} y z + 4 w^{5} x^{9} z^{2} + 3 w^{5} x^{8} y^{2} z + w^{5} x^{7} y^{4} + 2 w^{5} x^{7} y z^{2} + 4 w^{5} x^{5} y^{2} z^{2} \\
& + 3 w^{5} x^{4} y^{4} z + 4 w^{5} x^{4} y z^{3} + w^{5} x^{3} y^{6} + 2 w^{5} x^{3} y^{3} z^{2} + 2 w^{5} x^{3} z^{4} + 3 w^{5} x^{2} y^{2} z^{3} + 4 w^{5} x y^{4} z^{2} + 4 w^{5} x y z^{4} \\
& + 2 w^{5} y^{6} z + 4 w^{5} y^{3} z^{3} + 2 w^{5} z^{5} + 4 w^{4} x^{8} y^{6} + 2 w^{4} x^{8} y^{3} z^{2} + w^{4} x^{7} y^{5} z + 2 w^{4} x^{7} y^{2} z^{3} + 4 w^{4} x^{6} y^{7} \\
& + 3 w^{4} x^{6} y^{4} z^{2} + 4 w^{4} x^{6} y z^{4} + 3 w^{4} x^{5} y^{6} z + 4 w^{4} x^{5} y^{3} z^{3} + w^{4} x^{5} z^{5} + 2 w^{4} x^{4} y^{8} + w^{4} x^{4} y^{5} z^{2} \\
& + 4 w^{4} x^{3} y^{4} z^{3} + w^{4} x^{3} y z^{5} + 4 w^{4} x^{2} y^{9} + 3 w^{4} x^{2} y^{6} z^{2} + 3 w^{4} x^{2} y^{3} z^{4} + 4 w^{4} x y^{8} z + 4 w^{4} x y^{5} z^{3}  \\
& + 3 w^{4} y^{4} z^{4} + 3 w^{3} x^{9} y^{8} + 2 w^{3} x^{9} y^{5} z^{2} + 3 w^{3} x^{8} y^{7} z + 3 w^{3} x^{7} y^{9} + 2 w^{3} x^{7} y^{6} z^{2} + 3 w^{3} x^{6} y^{8} z \\
& + 4 w^{3} x^{5} y^{7} z^{2} + w^{3} x^{5} y^{4} z^{4} + 2 w^{3} x^{4} y^{9} z + w^{3} x^{4} y^{6} z^{3} + 2 w^{3} x^{4} y^{3} z^{5} + 4 w^{3} x^{4} z^{7} + 3 w^{3} x^{3} y^{11}  \\
& + 2 w^{3} x^{3} y^{5} z^{4} + w^{3} x^{3} y^{2} z^{6} + 4 w^{3} x^{2} y^{10} z + 2 w^{3} x^{2} y^{7} z^{3} + 4 w^{3} x^{2} y^{4} z^{5} + 4 w^{3} x^{2} y z^{7} + 3 w^{3} x y^{9} z^{2}  \\
&+ 4 w^{3} x y^{3} z^{6} + w^{3} y^{11} z + w^{3} y^{8} z^{3} + 3 w^{3} y^{5} z^{5} + 3 w^{3} y^{2} z^{7} + 2 w^{2} x^{10} y^{7} z^{2} + 2 w^{2} x^{9} y^{9} z + 3 w^{2} x^{9} y^{6} z^{3}  \\
& + 3 w^{2} x^{8} y^{5} z^{4} + 2 w^{2} x^{7} y^{10} z + 3 w^{2} x^{7} y^{7} z^{3} + 4 w^{2} x^{6} y^{12} + 2 w^{2} x^{6} y^{9} z^{2} + 3 w^{2} x^{6} y^{6} z^{4} + 3 w^{2} x^{5} y^{11} z  \\
& + 4 w^{2} x^{5} y^{5} z^{5} + 4 w^{2} x^{5} y^{2} z^{7} + 4 w^{2} x^{4} y^{13} + 4 w^{2} x^{4} y^{10} z^{2} + w^{2} x^{4} y^{7} z^{4} + w^{2} x^{4} y^{4} z^{6} + w^{2} x^{4} y z^{8} \\
&  + 3 w^{2} x^{3} y^{9} z^{3} + w^{2} x^{3} y^{6} z^{5} + w^{2} x^{3} y^{3} z^{7} + w^{2} x^{3} z^{9} + 2 w^{2} x^{2} y^{14} + 3 w^{2} x^{2} y^{11} z^{2} + 2 w^{2} x^{2} y^{8} z^{4} \\
& + 4 w^{2} x^{2} y^{2} z^{8} + 2 w^{2} x y^{10} z^{3} + w^{2} x y^{7} z^{5} + 2 w^{2} y^{15} + 2 w^{2} y^{12} z^{2} + 4 w^{2} y^{9} z^{4} + 3 w^{2} y^{3} z^{8} + w x^{15} y^{10}  \\
& + 3 w x^{11} y^{12} + 2 w x^{11} y^{9} z^{2} + 4 w x^{10} y^{11} z + w x^{10} y^{8} z^{3} + 4 w x^{10} y^{5} z^{5} + 4 w x^{9} y^{13} + 3 w x^{9} y^{7} z^{4} + 4 w x^{8} y^{9} z^{3} \\
&  + 3 w x^{7} y^{11} z^{2} + 2 w x^{7} y^{8} z^{4} + 2 w x^{7} y^{5} z^{6} + 4 w x^{6} y^{13} z + w x^{6} y^{10} z^{3} + 4 w x^{6} y^{7} z^{5} + 4 w x^{6} y^{4} z^{7} + w x^{5} y^{12} z^{2} \\
&  + 4 w x^{5} y^{6} z^{6} + 2 w x^{5} y^{3} z^{8} + 4 w x^{5} z^{10} + 3 w x^{4} y^{11} z^{3} + 2 w x^{4} y^{8} z^{5} + w x^{4} y^{2} z^{9} + 4 w x^{3} y^{16} + 2 w x^{3} y^{13} z^{2} \\
&  + 2 w x^{3} y^{7} z^{6} + 3 w x^{3} y^{4} z^{8} + 3 w x^{2} y^{15} z + 2 w x^{2} y^{12} z^{3} + w x^{2} y^{9} z^{5} + 2 w x^{2} y^{6} z^{7} + 4 w x^{2} y^{3} z^{9} + 4 w x^{2} z^{11} \\
& + 2 w x y^{11} z^{4} + 4 w x y^{8} z^{6} + w x y^{2} z^{10} + 4 w y^{16} z + 3 w y^{13} z^{3} + 2 w y^{10} z^{5} + 3 w y^{7} z^{7} + 2 w y^{4} z^{9} + 2 w y z^{11}  \\
& + x^{14} y^{10} z^{2} + 2 x^{13} y^{12} z + 4 x^{12} y^{14} + x^{11} y^{13} z + 4 x^{11} y^{10} z^{3} + x^{11} y^{7} z^{5} + 2 x^{10} y^{15} + 2 x^{10} y^{12} z^{2} + 4 x^{10} y^{6} z^{6} \\
&  + 3 x^{8} y^{7} z^{6} + x^{7} y^{9} z^{5} + x^{6} y^{17} + 4 x^{6} y^{8} z^{6} + x^{6} y^{5} z^{8} + x^{6} y^{2} z^{10} + 4 x^{5} y^{16} z + 3 x^{5} y^{10} z^{5} + 3 x^{5} y^{7} z^{7}  \\
& + 4 x^{4} y^{15} z^{2} + 4 x^{4} z^{12} + 3 x^{3} y^{17} z + 3 x^{3} y^{2} z^{11} + x^{2} y^{19} + x^{2} y^{4} z^{10} + 4 x y^{18} z + x y^{15} z^{3} + 4 x y^{3} z^{11} + x z^{13}  \\
& + 2 w x y^{14} z^{2}  + 2 w x^{3} y^{10} z^{4} + w x^{8} y^{6} z^{5} + 4 w x^{5} y^{9} z^{4} + 4 x^{9} y^{5} z^{7} + x^{15} y^{11} z + 3 y^{17} z^{2} + 3 y^{5} z^{10} + 3 y^{2} z^{12} \\
& + 4 w^{6} x^{2} y^{4}  + 4 w^{2} x^{5} y^{8} z^{3} + 4 w^{2} x^{8} y^{11} + 4 x^{5} y z^{11} + 4 w^{2} x^{2} y^{5} z^{6} + w^{3} x y^{6} z^{4} + 3 y^{20}  + 4 x^{16} y^{12}  \\
& + 3 w x^{13} y^{11} + w^{2} x^{3} y^{12} z + 4 w^{4} x^{4} y^{2} z^{4}   + 4 w^{4} x y^{2} z^{5}  + 4 w^{3} x^{6} y^{5} z^{3} + w^{3} x^{3} y^{8} z^{2}
\end{split}
\]
\end{Small}

\smallskip

It remains an irreducible surface of total degree 80, consistent with characteristic zero. 

For \(\F_5\), SageMath yields 149 solutions in \(\mathbb{A}^4(\F_5) \setminus \{0\}\), grouping into 74 points under \(\F_5^\times\)-action (\(q - 1 = 4\)).
    Only the following choices for \(S\) contribute to rational points:
    \begin{itemize}
        \item \(S = \{0\}\): \(N_S = 4\), \(k_S = 2\), \(\gcd(2, 4) = 2\), contribution = 2,
        \item \(S = \{0, 1, 2\}\): \(N_S = 20\), \(k_S = 2\), \(\gcd(2, 4) = 2\), contribution = 10,
        \item \(S = \{0, 1, 2, 3\}\): \(N_S = 52\), \(k_S = 2\), \(\gcd(2, 4) = 2\), contribution = 26.
    \end{itemize}
    Total \(|\cL_3(\F_5)| = 74\), with 99 singular points (66\%).

For \(\F_{25}\), 15,481 solutions yield 1294 points, with contributing support sets:
    \begin{itemize}
        \item \(S = \{0\}\): \(N_S = 24\), \(k_S = 2\), \(\gcd(2, 24) = 2\), contribution = 2,
        \item \(S = \{0, 1, 2\}\): \(N_S = 1032\), \(k_S = 2\), \(\gcd(2, 24) = 2\), contribution = 86,
        \item \(S = \{0, 1, 2, 3\}\): \(N_S = 11928\), \(k_S = 2\), \(\gcd(2, 24) = 2\), contribution = 994.
    \end{itemize}
    Total \(|\cL_3(\F_{25})| = 1294\), with 10,521 singular points (68\%).

\subsection{Application of Bounds}
\label{subsec-5.2}
For \(\cL_3\) (degree \(d_3 = 80\)), an upper bound on \(|\cL_3(\F_q)|\) in \(\bP(2,4,6,10)\) is:
\[
|\cL_3(\F_q)| \leq 40 q^2 + q + 1,
\]
derived from results like \cite{aubry}, where \(40 = d_3 / w_0\) (with \(w_0 = 2\)) scales the leading term for a hypersurface in weighted projective space, and \(q + 1\) adjusts for lower-dimensional contributions. 
We applied this bound to our computed values. For \(q = 5\), the calculation \(40 \cdot 25 + 5 + 1\) gives 1006, which exceeds 74. Similarly, for \(q = 25\), \(40 \cdot 625 + 25 + 1\) results in 25026, greater than 1294.
The bound holds and tightens as \(q\) increases, validating the computed counts.

\begin{remark}
Serre’s congruence \eqref{eqsc} does not apply since \(d_3 = 80 > 3\). The Conjecture  \eqref{conj2},    
 (\(|\cL_3(\F_q)| \equiv 1 \pmod{q}\)) is unmet for $q=5, 25$ e $125$, since  \(74 \equiv 4 \pmod{5}\), \(1294 \equiv 19 \pmod{25}\).
\end{remark}

\subsection{Zeta Function for \(\cL_3\)}
\label{subsec-5.4}
Using counts \(74, 1294\) for \(k = 1, 2\):
\[
Z(\cL_3, t; p=5) = \exp\left( 74 t + \frac{1294}{2} t^2 + \cdots \right) = \exp\left( 74 t + 647 t^2 + \cdots \right).
\]
Given \(\cL_3\)'s irreducibility as a surface at \(p=5\), we expect:
\[
Z(\cL_3, t; p=5) = \frac{P_1(t)}{(1 - t)(1 - 25t)P_2(t)},
\]
where \(P_1(t)\) and \(P_2(t)\) have degrees equal to Betti numbers \(b_1\) and \(b_2\). Assuming \(P_2(t) = 1\) and a linear \(P_1(t) = 1 + at\), the \(k=1\) term gives:
\[
74 = a + 1 + 25 \implies a = 48.
\]
Thus, \(P_1(t) = 1 + 48t\) yields:
\[
\frac{1 + 48t}{(1 - t)(1 - 25t)} = 74 t + 647 t^2 + 16174 t^3 + \cdots,
\]
matching \(t^1\) and \(t^2\) exactly (74 and 647), with growth \(c \cdot 25^k\) (\(c \approx 2\)), consistent with a 2-dimensional variety (poles at \(t = 1, \frac{1}{25}\)). The prior conjecture \(1 + 14t\) underestimates higher terms (e.g., 74, 664 vs. 1294). With only two counts, \(P_1(t)\)’s degree and \(P_2(t)\) remain uncertain; additional points (e.g., \(k=3\)) or singularity analysis (68\% singular at \(\F_{25}\)) are needed for precision.

\subsection{The case of characteristic $p=3$.}
For   \(n = 3\), \(F_3 \equiv x^2 y^{12} (2x^2 + y) (x^{12} + x^6 y^3 + y^6) \pmod{3}\). It degenerates to 0-dimensional sets at \(\F_3\) (62 points, 70\% singular), recovering surface-like growth in extensions:

\begin{itemize}
    \item \(\F_3\): 63 solutions, 62 points.
    \item \(\F_9\): 2025 solutions, 508 points (68\% singular).
    \item \(\F_{27}\): 57,591 solutions, 4430 points.
    \item \(\F_{81}\): 1,581,201 solutions, 39540 points (68\% singular).
\end{itemize}

\subsection{Zeta Function}
\label{subsec-7.2}
Using counts \(62, 508, 4430, 39540\):
\[
Z(\cL_2, t; p=3) = \frac{1 + 49t - 747t^2}{(1 - t)(1 - 3t)(1 - 9t)}.
\]
holds for \(n =  3\).


\section{Computing Rational Points and Zeta Function for \(\cL_5\)}
\label{sec-6}

This section outlines the computation of \(\F_q\)-rational points on \(\cL_5\), the locus of genus 2 curves with \((5, 5)\)-split Jacobians, over fields \(\F_q\) with \(p \neq 2\), extending the framework from \cref{sec-4} and \cref{sec-5}. 
The orbit-stabilizer method from \cite{2025-3} applies to \(\cL_5\), defined by \(F_5(x_0, x_1, x_2, x_3) = 0\)  in \(\bP_\w = \bP(2,4,6,10)\). The point count is:
\[
|\cL_5(\F_q)| = \sum_{S \neq \emptyset} \frac{N_S \cdot \gcd(k_S, q-1)}{q - 1},
\]
where \(S \subseteq \{0, 1, 2, 3\}\) is a nonempty support set, \(N_S\) is the number of tuples \((x_0, x_1, x_2, x_3)\) with \(x_i \neq 0\) for \(i \in S\) and \(x_i = 0\) for \(i \notin S\) satisfying \(F_5 = 0\), and \(k_S = \gcd(\{w_i \mid i \in S\})\) with weights \(w_0 = 2, w_1 = 4, w_2 = 6, w_3 = 10\). 

The case \(n = 5\) was studied in \cite{2005-1}, where a degree-2 equation for the function field of \(\cL_5\) was derived by embedding \(\bP_\w\) into \(\bP^3\) via a Veronese map and expressing \(\cL_5\) in terms of absolute invariants \(i_1, i_2, i_3\). A Gr\"obner basis approach for \(\bP_\w\), proposed in \cite{elira-0}, simplifies such computations, and the explicit equation of \(\cL_5\) as a weighted hypersurface in \(\bP_\w\) is computed in \cite{elira-1}. 
In any case, such equation is very large. Below we display   the surface in \cref{uvw-surf}  in characteristic $p=5$  which is an irreducible surface with degrees 6, 6, and 2 in $u$, $v$, and $w$, exactly as in 

\begin{Small}
\[
\begin{split}
& 2 u^{5} v w +4 u^{3} v^{3} w +u^{2} v^{4} w +u^{6}+2 u^{5} v +3 u^{4} v w +2 u^{3} v^{2} w +4 u^{2} v^{3} w +4 u^{2} v^{2} w^{2}+2 u \,v^{5}\\
& +2 u \,v^{4} w +3 u \,v^{3} w^{2}+4 v^{6}+2 v^{5} w +4 v^{4} w^{2}+u^{4} v +3 u^{3} v^{2}+4 u^{3} v w +4 u^{2} v^{3}+2 u \,v^{4}+3 u \,v^{3} w \\
& +3 u \,v^{2} w^{2}+v^{5}+2 v^{4} w +3 v^{3} w^{2}+2 u^{2} v^{2}+3 u \,v^{3}+u \,v^{2} w +3 v^{4}+3 v^{3} w +4 v^{2} w^{2}+3 v^{3}  =  0 \\
\end{split}
\]
\end{Small}

\noindent The equation \[F_5 (J_2, J_4, J_6, J_{10}) =0, \]
as expected is quite large.   This equation’s complexity precludes direct point count calculations here. 

\section{Cryptographic Implications and Applications}
\label{sec-8}

Isogeny-based cryptography exploits the computational hardness of finding isogenies between abelian varieties, offering a robust framework for post-quantum security. Genus 2 curves with \((n,n)\)-split Jacobians, parameterized by the loci \(\cL_n\) (\(n = 2, 3, 5\)), are pivotal in this context, as their splitting property enables the construction of isogenies with kernel \((\Z/n\Z)^2\). This section outlines a theoretical method to compute such \((n,n)\)-isogenies over a finite field \(\F_q\), utilizing the structure of \(\cL_n\) as defined in \cref{sec-3}. We enhance this framework by integrating endomorphism ring computations (\cref{sec-10}), refining security analysis with point counts and zeta functions from \cref{sec-4}--\cref{sec-6}, and proposing an enriched protocol design, offering a comprehensive foundation for cryptographic applications.

\subsection{Isogeny-Based Cryptography and Jacobian Splittings}
\label{subsec-8.1}

The security of isogeny-based protocols hinges on the difficulty of computing isogenies between abelian varieties over \(\F_q\). For a genus 2 curve \(C\) with Jacobian \(J(C)\), an \((n,n)\)-splitting implies an isogeny \(\phi: J(C) \to E_1 \times E_2\), where \(E_1\) and \(E_2\) are elliptic curves and \(\ker(\phi) \iso (\Z/n\Z)^2\). This property, encoded by \(\cL_n\), facilitates explicit isogeny computations, potentially enhancing efficiency in protocols like key exchange or signature schemes, yet it may introduce vulnerabilities if the splitting—or the endomorphism ring \(\End(J(C))\)—is too easily exploited. The method below utilizes \(\cL_n\) to systematically compute these isogenies, while subsequent subsections balance efficiency with security considerations, employing \(\End(J(C))\)’s structure (\cref{sec-10}).

\subsection{General Method for Computing \((n,n)\)-Isogenies}
\label{subsec-8.2}

To compute an \((n,n)\)-isogeny \(\phi: J(C) \to E_1 \times E_2\) for a genus 2 curve \(C\) over \(\F_q\) with \(J(C)\) \((n,n)\)-split, we utilize the locus \(\cL_n\) in \(\bP_\w = \bP(2,4,6,10)\), defined by \(F_n (J_2, J_4, J_6, J_{10}) = 0\). The process is outlined as follows.

\subsubsection{Pick a rational point \(\mathfrak{p} \in \cL_n\) over \(\F_q\)}
\label{subsubsec-8.2.1}

First, select a rational point
\[
\mathfrak{p} = [J_2 : J_4 : J_6 : J_{10}] \in \cL_n(\F_q),
\]
satisfying \(F_n = 0\), where coordinates adhere to the weighted scaling \([t^2 J_2 : t^4 J_4 : t^6 J_6 : t^{10} J_{10}]\) for \(t \in \F_q^\times\).

\subsubsection{Construct the genus two curve \(C\)}
\label{subsubsec-8.2.2}

Determine a curve \(C\) as \(y^2 = f(x)\) using the algorithm in \cite{2016-3} where the coefficients of \(f(x)\) are now in terms of Igusa invariants \((J_2, J_4, J_6, J_{10})\). The algorithm in \cite{2016-3} is an extension of Mestre's algorithm, but also works in the case when the genus two curve has extra automorphisms. This step ensures \(C\) matches the chosen point on \(\cL_n\), with \(J_{10} \neq 0\) guaranteeing smoothness.

\subsubsection{Compute the Jacobian \(J(C)\)}
\label{subsubsec-8.2.3}

Third, compute the Jacobian \(J(C)\) as the group of degree-0 divisor classes on \(C\), represented via Mumford’s coordinates (pairs \((u(x), v(x))\), where \(u(x) = x^2 + u_1 x + u_0\) is quadratic and \(v(x) = v_1 x + v_0\) is linear satisfying \(v^2 \equiv f(x) \pmod{u}\)).

\subsubsection{Determine the \(n\)-torsion subgroup \(J(C)[n]\)}
\label{subsubsec-8.2.4}

The \(n\)-torsion subgroup \(J(C)[n]\) over an algebraic closure is isomorphic to \((\Z/n\Z)^4\), though its size over \(\F_q\) depends on the Frobenius polynomial
\[
P(T) = T^4 - s_1 T^3 + s_2 T^2 - q s_1 T + q^2.
\]
For \(P \in J(C)[n]\), \([n]P = 0\), and \(|J(C)(\F_q)| = P(1)\).

\subsubsection{Pick a subgroup \(K \subset J(C)[n]\) of order \(n^2\)}
\label{subsubsec-8.2.5}

Identify a subgroup \(K \subset J(C)[n]\) of order \(n^2\), isotropic under the Weil pairing
\[
e_n: J(C)[n] \times J(C)[n] \to \mu_n,
\]
where \(e_n(P, Q) = 1\) for all \(P, Q \in K\). This involves:
\begin{enumerate}
    \item Generating a basis for \(J(C)[n]\) over \(\F_q\) (or an extension if needed), computing points \(P_i = (u_i(x), v_i(x)) - \infty\) such that \(n P_i = 0\) using Cantor’s addition algorithm over \(\F_{q^d}\) (where \(n \mid q^d - 1\)),
    \item Selecting a subgroup \(K\) of order \(n^2\) via linear algebra over \(\Z/n\Z\), e.g., \(K = \langle P_1, P_2 \rangle\) with \(P_1, P_2\) linearly independent, forming \(K = \{a P_1 + b P_2 \mid a, b = 0, \ldots, n-1\}\),
    \item Verifying isotropy by computing the Weil pairing on \(K\)’s generators, \(e_n(P_i, P_j) = (-1)^{\langle P_i, P_j \rangle_n}\), where \(\langle P_i, P_j \rangle_n\) is the intersection number modulo \(n\). Adjust if \(e_n(P_1, P_2) \neq 1\). Since \(C \in \cL_n(\F_q)\), \(K \iso (\Z/n\Z)^2\) exists.
\end{enumerate}

\subsubsection{Compute the quotient \(J(C)/K\)}
\label{subsubsec-8.2.6}

The quotient \(J(C)/K\) is expected to be isomorphic to \(E_1 \times E_2\). For \(n\) odd, use Vélu-type formulas adapted for genus 2, generalizing Richelot isogenies for \(n = 2\), by:
\begin{itemize}
    \item Representing divisors in \(J(C)\) using Mumford coordinates, e.g., \[ D = (u(x), v(x)) - 2\infty, \]
    \item Applying \(K\)’s action to form equivalence classes, \(D \sim D + P\) for \(P \in K\), via addition laws (e.g., for \(P = (x_1, y_1) - \infty\), \(D + P = (u'(x), v'(x)) - \infty\)),
    \item Constructing the codomain \(J(C)/K\) as a product of elliptic curves via explicit equations or theta functions. For \(n = 3\), if \(K = \langle P_1, P_2 \rangle\), \(J(C)/K\) yields \(E_1: y^2 = x^3 + a_1 x + b_1\), \(E_2: y^2 = x^3 + a_2 x + b_2\), derived from \(K\)’s orbit.
\end{itemize}

\subsubsection{Verify the isogeny}
\label{subsubsec-8.2.7}

One can verify the isogeny
\[
\phi: J(C) \to J(C)/K \iso E_1 \times E_2
\]
by computing the j-invariants of \(E_1\) and \(E_2\) or testing \(\phi(nP) = 0\) for sample \(P \in J(C)\), confirming \(\ker(\phi) = K\).

This method applies uniformly to \(n = 2, 3, 5\), with \(|\cL_n(\F_q)|\) determining the availability of suitable curves, a key factor in cryptographic design. For \(n = 2\), this is well known by Richelot isogenies; see \cite[Prop.~2.1]{2021-1} for a detailed discussion. The computational hardness of this process, and of determining \(\End(J(C))\) (\cref{sec-10}), underpins the security enhancements detailed below.

\subsection{Cryptographic Relevance and Protocol Enhancement}
\label{subsec-8.3}
The counts \(|\cL_2(\F_q)|\), \(|\cL_3(\F_q)|\), and \(|\cL_5(\F_q)|\) from \cref{sec-4,sec-5,sec-6}, alongside their zeta functions, quantify the pool of curves with computable \((n,n)\)-isogenies. For \(\cL_2\), counts like 62 (\(\F_3\)) to 39540 (\(\F_{81}\)) suggest a large key space, while \(\cL_3\)’s 2 (\(\F_3\)) to 80 (\(\F_{81}\)) indicate constraint, potentially enhancing security. We enhance this framework by incorporating the endomorphism ring \(\End(J(C))\) (\cref{sec-10}), which refines the cryptographic hardness.

Consider an enhanced key exchange adapting Diffie-Hellman:
\begin{itemize}
    \item Alice picks \(C \in \cL_n(\F_q)\), computes \(\phi_A: J(C) \to J(C)/K_A \iso E_{1A} \times E_{2A}\) with private \(K_A \subset J(C)[n]\), and uses Algorithm 10.1 (\cref{sec-10}) to compute a basis of \(\End(J(C))\), e.g., \(\{ \alpha_1, \alpha_2, \alpha_3, \alpha_4 \}\). She shares \(j(E_{1A}), j(E_{2A})\) and a partial endomorphism ring description (e.g., \(\alpha_1\)’s action on a test point).
    \item Bob computes \(\phi_B: J(C) \to J(C)/K_B \iso E_{1B} \times E_{2B}\) with private \(K_B\), sharing \(j(E_{1B}), j(E_{2B})\) and a similar \(\End(J(C))\) element.
    \item The shared secret is \(J(C)/(K_A + K_B)\), computable only with both kernels, augmented by verifying consistency with \(\End(J(C))\) (e.g., applying shared endomorphisms to confirm the quotient).
\end{itemize}
This extends SIDH to genus 2, balancing efficiency (precomputed isogenies via \(\cL_n\), \cref{sec-9}) with hardness (sparse key spaces and complex \(\End(J(C))\)), as detailed in the next subsection.

\subsection{Security Analysis with Endomorphism Rings}
\label{subsec-8.4}
Security hinges on the difficulty of computing \(\phi\) and \(\End(J(C))\). In \(p \neq 3\), \(\cL_n\)’s good reduction (\cref{sec-4,sec-5}) yields diverse counts (e.g., \(\cL_2(\F_5) = 64\), \(\cL_3(\F_5) = 74\)), with \(\End(J(C))\) varying by \(E_1, E_2\)’s nature (ordinary or supersingular, \cref{sec-10}). A larger ring (e.g., rank 4 for CM elliptic curves) may facilitate isogeny attacks, reducing hardness, while sparse counts enhance it.

For curves with extra automorphisms (\cref{sec-11}), \(\End(J(C))\) often exceeds \(\Z[\pi, \bar{\pi}]\), increasing efficiency but potentially weakening security if too large, necessitating careful parameter choice.


\subsection{Comparison with Elliptic Curve SIDH}
\label{subsec-8.5}
Elliptic curve SIDH relies on supersingular isogeny graphs, with endomorphism ring computation subexponential for ordinary curves \cite{bison-2011} and exponential for supersingular ones \cite{page2024supersingular}. The higher dimension of genus 2 escalates complexity: computing \(\End(J(C))\) is subexponential at best (\cref{sec-10}), often exponential due to quartic CM fields or non-simple cases \cite{bison-2015}. The explicit structure of \(\cL_n\) (\cref{sec-3}) aids efficiency, but the variability of \(\End(J(C))\) (\cref{sec-10})   suggest a post-quantum advantage over SIDH, tempered by the need to tune \(n, q, p\) to maintain hardness against endomorphism-based attacks \cite[Problem 1.2]{anni}.


\section{Efficient Detection of \((n, n)\)-Split Jacobians Using \(\cL_n\)}
\label{sec-9}

The explicit equations of the loci \(\cL_n\) (\(n = 2, 3, 5\)), as derived earlier in the paper, provide an efficient and practical method for determining whether a genus 2 curve over a finite field \(\F_q\) has an \((n, n)\)-split Jacobian. This method, which involves computing the Igusa invariants of a curve and evaluating the polynomial \(F_n\), stands out for its simplicity and computational efficiency. In this section, we explore how this approach enhances isogeny-based cryptography, offering benefits in verification, protocol design, security analysis, and characteristic-specific applications.

\subsection{The Method: Computing Invariants and Evaluating \(F_n\)}
\label{subsec-9.1}
For a genus 2 curve \(C: y^2 = f(x)\) over \(\F_q\), the Igusa invariants \((J_2, J_4, J_6, J_{10})\) define its isomorphism class in the weighted projective space \(\bP_\w = \bP(2,4,6,10)\). The locus \(\cL_n\), defined by \(F_n(J_2, J_4, J_6, J_{10}) = 0\), identifies curves whose Jacobians \(J(C)\) admit an \((n, n)\)-splitting—that is, an isogeny \(J(C) \to E_1 \times E_2\) with kernel isomorphic to \((\Z/n\Z)^2\), where \(E_1\) and \(E_2\) are elliptic curves. The detection process is straightforward:

\begin{enumerate}
    \item \textbf{Compute Igusa Invariants}: Using the coefficients of \(f(x)\), calculate \((J_2, J_4, J_6, J_{10})\).
    \item \textbf{Evaluate \(F_n\)}: Substitute these invariants into the polynomial \(F_n\).
    \item \textbf{Check the Condition}: If \(F_n = 0\), then \(C \in \cL_n\), and \(J(C)\) is \((n, n)\)-split.
\end{enumerate}

This method is deterministic and requires only invariant computation followed by a single polynomial evaluation, offering a significant efficiency advantage over alternative approaches.

\subsection{Efficiency and Advantages}
\label{subsec-9.2}
The efficiency of using \(\cL_n\) arises from the explicit form of \(F_n\) and the directness of the method. For \(n = 2\), \(F_2\) is a degree-30 polynomial with 25 terms, while \(F_3\) (degree 80) and \(F_5\) (degree 150) are more complex but remain manageable for small \(n\). Key advantages include:

\begin{itemize}
    \item \textbf{Simplicity}: The method reduces the splitting check to a polynomial evaluation, avoiding iterative or probabilistic techniques.
    \item \textbf{Low Computational Overhead}: Unlike graph-based methods (e.g., Richelot isogeny traversals for \(n = 2\)), it involves a single computation once invariants are known.
    \item \textbf{Practicality for Small \(n\)}: For cryptographically relevant cases like \(n = 2\) or \(n = 3\), the evaluation of \(F_n\) is computationally feasible, even over large fields \(\F_q\).
\end{itemize}

This efficiency makes the method particularly appealing for applications requiring rapid assessment of curve properties.

\subsection{Applications in Verification and Testing}
\label{subsec-9.3}
The ability to quickly verify whether a curve lies on \(\cL_n\) has immediate utility in cryptographic verification and testing:

\begin{itemize}
    \item \textbf{Protocol Requirements}: In isogeny-based protocols, such as genus 2 extensions of SIDH, curves with \((n, n)\)-split Jacobians may be required for efficient isogeny computations. Evaluating \(F_n\) provides a fast check—e.g., confirming a \((2, 2)\)-split Jacobian via \(F_2\)—streamlining curve selection.
    \item \textbf{Result Validation}: For algorithms computing split Jacobians (e.g., those in \cref{sec-8}), \(F_n = 0\) serves as an independent verification step. If a curve is identified as \((3, 3)\)-split, evaluating \(F_3\) confirms the result, enhancing reliability.
\end{itemize}

\subsection{Impact on Protocol Design}
\label{subsec-9.4}
The explicit nature of \(\cL_n\) influences the design of cryptographic protocols by enabling targeted curve selection and optimization:

\begin{itemize}
    \item \textbf{Curve Selection}: Protocols can use \(F_n\) to filter curves with desired splitting properties during initialization. For instance, a protocol requiring \((2, 2)\)-split Jacobians can generate curves and test \(F_2 = 0\), ensuring suitability without extensive computation.
    \item \textbf{Efficiency Gains}: For small \(n\), the low cost of evaluating \(F_n\) supports lightweight implementations, such as in embedded systems, where computational resources are limited.
\end{itemize}

\subsection{Security Analysis Using \(\cL_n\)}
\label{subsec-9.5}
The equations of \(\cL_n\), combined with point counts \(|\cL_n(\F_q)|\) and zeta functions \(Z(\cL_n, t)\), enable detailed security analysis:

\begin{itemize}
    \item \textbf{Density of Split Curves}: The count \(|\cL_n(\F_q)|\) indicates the prevalence of \((n, n)\)-split curves. A low density (e.g., \(|\cL_3(\F_3)| = 2\)) suggests rarity, potentially increasing security by limiting exploitable curves, while a higher density (e.g., \(|\cL_2(\F_{81})| = 39540\)) may require careful parameter tuning.
    \item \textbf{Field Size Scaling}: The zeta function \(Z(\cL_n, t)\) predicts \(|\cL_n(\F_{q^k})|\) for extensions, aiding in assessing attack feasibility as \(q\) grows. A slow growth rate could bolster long-term security.
\end{itemize}

\subsection{Characteristic-Specific Insights}
\label{subsec-9.6}
The behavior of \(\cL_n\) varies with the characteristic \(p\) of \(\F_q\), offering tailored cryptographic insights:

\begin{itemize}
    \item \textbf{Collapse in \(p = 3\)}: In characteristic 3, \(\cL_n\) simplifies, potentially speeding up \(F_n\) evaluation and curve detection. This could optimize protocols over \(\F_{3^k}\), though a higher density of split curves may necessitate additional security measures.
    \item \textbf{General \(p\)}: For \(p \neq 3\), the full complexity of \(\cL_n\) allows for strategic characteristic selection—e.g., choosing \(p\) where split curves are scarce to enhance security.
\end{itemize}

This section underscores the practical value of \(\cL_n\) in isogeny-based cryptography, bridging theoretical geometry with applied cryptography. Its efficient detection method supports verification, protocol design, and security analysis, complementing the broader cryptographic framework.

\section{Endomorphism Rings of \(\cL_n\) and Their Computation}
\label{sec-10}

The loci \(\cL_n\), parameterizing genus 2 curves over finite fields \(\F_q\) with \((n, n)\)-split Jacobians, provide a rich framework for both arithmetic geometry and cryptography, as explored in previous sections. A natural extension of this study is the computation of the endomorphism ring \(\End(J(C))\) for a curve \(C \in \cL_n(\F_q)\), defined over the algebraic closure \(\overline{\F}_q\). This ring, an order in the endomorphism algebra \(K = \Q \otimes \End(J(C))\), refines the isogeny class structure beyond the characteristic polynomial of the Frobenius endomorphism \(\pi\) and offers deeper insights into the cryptographic properties of these Jacobians. Building on the explicit equations of \(\cL_n\) (\cref{sec-3}) and the point counts over various fields (\cref{sec-4}--\cref{sec-6}), we adapt computational techniques from the literature \cite{bison-2015,avisogenies,cosset} to determine \(\End(J(C))\), enhancing the methods introduced in \cref{sec-8} and \cref{sec-9} for isogeny-based cryptography.

\subsection{Connection to \(\cL_n\) and Non-Simple Jacobians}
\label{subsec-10.1}
For a curve \(C \in \cL_n\), the Jacobian \(J(C)\) admits an \((n, n)\)-isogeny \(\phi: J(C) \to E_1 \times E_2\), where \(E_1\) and \(E_2\) are elliptic curves and the kernel is isomorphic to \((\Z/n\Z)^2\) (\cref{sec-2}). This splitting property aligns \(C\) with the non-simple abelian surfaces studied in \cite[Proposition 5.12]{anni}, where such an isogeny preserves principal polarization when mapped to a product with the product polarization. Consequently, the endomorphism algebra \(\Q \otimes \End(J(C))\) is isomorphic to \(\Q \otimes (\End(E_1) \times \End(E_2))\), and \(\End(J(C))\) is a suborder of \(\End(E_1) \times \End(E_2)\) consisting of elements \(s\) such that the kernel \(\ker(\phi) \subset \ker(s)\) \cite[Proposition 5.9]{anni}. At minimum, \(\End(J(C))\) contains \(\Z[\pi, \bar{\pi}]\), where \(\bar{\pi} = q/\pi\) is the Verschiebung, but its full structure depends on the nature of \(E_1\) and \(E_2\) (ordinary or supersingular) and the field characteristic.

The \(p\)-rank of \(J(C)\), computable from the Frobenius polynomial
\[
f_{J(C)}(t) = t^4 + a_1 t^3 + a_2 t^2 + q a_1 t + q^2
\]
(\cref{subsec-2.2}), further informs this structure. For \(p \neq 2, 3\), \(\cL_n\) exhibits good reduction (\cref{sec-4}--\cref{sec-5}), and \(J(C)\) typically has \(p\)-rank 2 (both \(E_1, E_2\) ordinary) or 1 (one ordinary, one supersingular). 

\subsection{Algorithm for Computing \(\End(J(C))\)}
\label{subsec-10.2}
We propose an algorithm to compute a basis of \(\End(J(C))\) for \(C \in \cL_n(\F_q)\), adapting the \((n, n)\)-isogeny computation from \cref{subsec-8.2} and the coprime isogeny method from \cite[Proposition 5.1]{anni}. The approach exploits the efficiency of detecting \(\cL_n\) membership via \(F_n\) (\cref{sec-9}) and builds on established techniques for elliptic curve endomorphism rings \cite{bison-2011,robert}.

\begin{description}
\item[Algorithm 10.1: Computing the Endomorphism Ring of \(J(C)\)]  

\item[Input:] A finite field \(\F_q\) with \(q = p^k\), \(p \neq 2\), and an integer \(n \geq 2\).
\item[Output:] A basis of \(\End(J(C))\) for some \(C \in \cL_n(\F_q)\) in good representation.  \\

\begin{enumerate}[leftmargin=0pt]

    \item \textbf{Select a Point on \(\cL_n\)}: Choose a rational point \(\mathbf{p} = [J_2 : J_4 : J_6 : J_{10}] \in \cL_n(\F_q)\) satisfying \(F_n(\mathbf{p}) = 0\), using the orbit-stabilizer counts from \cref{sec-4}--\cref{sec-6} (e.g., 64 points for \(\cL_2(\F_5)\)).
    \item \textbf{Construct the Curve \(C\)}: Apply the algorithm from \cite{2016-3} to derive \(C: y^2 = f(x)\) from \(\mathbf{p}\), ensuring \(J_{10} \neq 0\) for smoothness.
    \item \textbf{Compute the \((n, n)\)-Isogeny}: Follow \cref{subsec-8.2}:
    \begin{itemize}
        \item Compute \(J(C)\) using Mumford coordinates and Cantor’s algorithm \cite{cantor1987computing}.
        \item Determine \(J(C)[n]\), identify a maximal isotropic subgroup \(K \cong (\Z/n\Z)^2\), and compute \(\phi: J(C) \to B = J(C)/K \cong E_1 \times E_2\) using adapted Vélu-type formulas.
    \end{itemize}
    \item \textbf{Generate Coprime Isogenies}: For primes \(\ell_1, \ell_2 \neq n, p\) (e.g., \(\ell_1 = 5, \ell_2 = 7\) if \(n=2, p=3\)):
    \begin{itemize}
        \item Compute \(J(C)[\ell_i]\), select isotropic subgroups \(K_i \subset J(C)[\ell_i]\), and derive isogenies \(\psi_i: J(C) \to C_i = J(C)/K_i\) of degree \(\ell_i^2\).
        \item Ensure \(\deg(\phi) = n^2\) and \(\deg(\psi_i)\) are coprime.
    \end{itemize}
    \item \textbf{Compute Endomorphism Rings of Codomains}: For \(B, C_1, C_2\):
    \begin{itemize}
        \item If \(B = E_1 \times E_2\) has \(p\)-rank 2 (ordinary), use \cite{robert} for polynomial-time computation of \(\End(E_i)\).
        \item If \(p\)-rank 1 or 0 (e.g., \(p=3\)), apply \cite{page2024supersingular} for supersingular cases or \cite{bison-2015} for mixed cases.
        \item For \(C_i\), test simplicity via \(f_{C_i}(t)\) \cite[Theorem 6]{anni}; if simple, use \cite{bison-2015}; if non-simple, recurse to elliptic factors.
    \end{itemize}
    \item \textbf{Reconstruct \(\End(J(C))\)}: Using \cite[Proposition 5.1]{anni}:
    \begin{itemize}
        \item For bases \((\eta_i) \subset \End(B)\), \((\nu_i) \subset \End(C_1)\), \((\mu_i) \subset \End(C_2)\), compute \(\beta_i = \hat{\phi} \circ \eta_i \circ \phi\), \(\gamma_i = \hat{\psi}_1 \circ \nu_i \circ \psi_1\), \(\delta_i = \hat{\psi}_2 \circ \mu_i \circ \psi_2\).
        \item Form the Gram matrix via \(\langle \alpha, \beta \rangle = \operatorname{tr}(\alpha \circ \beta^\dagger)\) \cite[Lemma 3.2]{anni}, and extract a basis of the lattice \(\Lambda_B + \Lambda_{C_1} + \Lambda_{C_2} = \End(J(C))\).
    \end{itemize}
\end{enumerate}
\end{description}

\medskip

\noindent \textbf{Complexity}: Step 1 is polynomial in \(\log q\) due to \(F_n\)’s evaluation (degree \(d_n = 30, 80, 150\) for \(n=2, 3, 5\)). Step 3’s isogeny computation is polynomial in \(n\) and \(\log q\) \cite{cosset}. Steps 4-5 depend on \(E_i\)’s nature: polynomial for ordinary \cite{robert}, subexponential otherwise \cite{bison-2015}. Step 6 is polynomial in the basis size and \(\log q\). Overall complexity is subexponential in \(\log q\), improved by \(\cL_n\)’s pre-filtering compared to exhaustive torsion searches.

\begin{ex}[\(\cL_2\) over \(\F_5\)] \label{subsec-10.3}
Consider \(\cL_2(\F_5)\) with 64 points (\cref{sec-4}). Select \(\mathbf{p} = [1:1:1:1]\) (assuming \(F_2=0\); adjust coordinates as needed from SageMath data). 
Construct \(C\)     
compute \(J(C)[2]\), and find 
\[
\phi: J(C) \to E_1 \times E_2
\]
%
Both are ordinary (\(p=5\)), so \(\End(E_i) = \Z[\sqrt{-d_i}]\) via \cite{robert}. 

Compute \(\psi_1: J(C) \to C_1\) (degree 25) and check \(C_1\)’s simplicity. If non-simple, \(C_1 \cong E_3 \times E_4\); otherwise, use \cite{bison-2015}. The resulting \(\End(J(C))\) likely exceeds \(\Z[\pi, \bar{\pi}]\) (index computable), reflecting the \((2, 2)\)-splitting’s additional structure.
\end{ex}

\subsection{Cryptographic and Geometric Implications}
\label{subsec-10.4}
The size of \(\End(J(C))\) impacts cryptographic security (\cref{sec-8}). For \(p=5\), a larger ring (e.g., including CM elements) may facilitate isogeny computation, reducing hardness, while \(p=3\)’s collapse   might constrain \(\End(J(C))\) to a uniform suborder of \(\M_2(\mathcal{B}_{3,\infty})\) \cite[Section 5.2.1]{anni}, balancing efficiency and security. 
%

This algorithm complements the detection method in \cref{sec-9}, offering a comprehensive toolset for \(\cL_n\)’s arithmetic and cryptographic study, with practical implementation feasible via SageMath enhancements (\cref{sec-12}).


\section{Curves with Extra Automorphisms and \(\cL_n\)}
\label{sec-11}

This section examines genus 2 curves over a finite field \(\F_q\) (\(q = p^k\), \(p \neq 2\)) with automorphisms beyond the hyperelliptic involution, emphasizing their connection to the loci \(\cL_n\). We explore how these automorphisms facilitate coordinate normalization, parametrize the curves, reveal elliptic subcovers, and determine the endomorphism rings of their Jacobians, with implications for both geometry and cryptography.

We follow the approach in \cite{2000-2}.  Consider a genus 2 curve \(C\) with an elliptic involution \(z_1\). Denote by \(\Gamma = PGL(2, \bC)\), and let \(z_0\) be the hyperelliptic involution, so \(z_2 = z_1 z_0\). The fixed fields of \(z_1\) and \(z_2\) are elliptic subcovers denoted \(E_1\) and \(E_2\), respectively. Our analysis begins by normalizing coordinates under these automorphisms.

\subsection{Coordinate Normalization}
To study \(C\), we normalize the coordinate \(x\) such that \(z_1(x) = -x\). This determines \(x\) up to a coordinate change by some \(\g \in \Gamma\) centralizing \(z_1\), where \(\g(X) = m x\) or \(\g(x) = \frac{m}{X}\), \(m \in k \setminus \{0\}\). Thus, the Weierstrass points of \(C\) can be taken as \(\{\pm \alpha_1, \pm \alpha_2, \pm \alpha_3\}\). Let \(a, b, c\) be the symmetric polynomials of \(\alpha_1^2, \alpha_2^2, \alpha_3^2\). Then \(C\) has an equation:
\[
Y^2 = x^6 - a x^4 + b x^2 - c.
\]
The condition \(abc = 1\) implies \(1 = -\g(\alpha_1) \dots \g(\alpha_6)\), forcing \(m^6 = 1\). Hence, \(C\) is isomorphic to a curve with equation:
\begin{equation}
Y^2 = x^6 - a x^4 + b x^2 - 1,
\end{equation}
where \(27 - 18ab - a^2 b^2 + 4a^3 + 4b^3 \neq 0\).

The coordinate \(x\) is determined up to the action of a subgroup \(H \iso D_6\) of \(\Gamma\), generated by \(\tau_1: x \to \e_6 x\) and \(\tau_2: x \to \frac{1}{x}\), with \(\e_6\) a primitive 6th root of unity and \(\e_3 = \e_6^2\). Here, \(\tau_1\) replaces \(a\) with \(\e_3 b\) and \(b\) with \(\e_3^2 b\), while \(\tau_2\) swaps \(a\) and \(b\). The invariants of this action are:
\[
u := a b, \quad v := a^3 + b^3.
\]
These parameters enable a birational parametrization of \(\cL_2\) via the mapping:
\[
A: (u, v) \to (i_1, i_2, i_3),
\]
where \(i_1, i_2, i_3\) are absolute invariants. The pair \((u, v)\) uniquely determines the isomorphism classes of curves in \(\cL_2\), as captured in the following lemma.

\begin{lem} \label{lem1}
\(k(\cL_2) = k(u, v)\).
\end{lem}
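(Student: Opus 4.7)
The plan is to present $k(\cL_2)$ as the fixed field of a finite group action on $k(a,b)$ and then verify that $u,v$ generate this fixed field. The correspondence $(a,b) \mapsto (C: Y^2 = x^6 - ax^4 + bx^2 - 1)$ described just before the lemma exhibits $\cL_2$ as the quotient of the $(a,b)$-plane by the action of $H \cong D_6$. First I would replace $H$ by its effective quotient on $(a,b)$: the subgroup $\langle \tau_1^3 \rangle$ acts on $x$ by $x \mapsto -x$, which fixes the equation of $C$ (all powers of $x$ are even) and hence acts trivially on $(a,b)$. Since the remaining cosets of $H/\langle \tau_1^3 \rangle$ are represented by $\tau_1, \tau_1^2$ (nontrivial cube-root-of-unity scalings of $(a,b)$) and $\tau_2$ (the nontrivial swap $a \leftrightarrow b$), the effective group is $G := H/\langle \tau_1^3 \rangle \cong S_3$, of order $6$. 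Consequently $k(\cL_2) = k(a,b)^G$.

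Second, I would verify that $u = ab$ and $v = a^3 + b^3$ lie in $k(a,b)^G$. Under the class of $\tau_1$, say $(a,b) \mapsto (\zeta_3^2 a, \zeta_3 b)$, the product $ab$ is preserved (the $\zeta_3$-exponents sum to $0 \pmod 3$) and so is $a^3 + b^3$ (each summand is individually fixed since $\zeta_3^3 = 1$); under $\tau_2$ both $u$ and $v$ are manifestly invariant. Hence $k(u,v) \subseteq k(a,b)^G$.

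Third, I would prove the reverse inclusion by a degree count. Over $k(u,v)$ the pair $a^3, b^3$ satisfies $T^2 - vT + u^3 = 0$, giving $[k(u,v,a^3):k(u,v)] \leq 2$; then $a$ is a root of $x^3 - a^3 \in k(u,v,a^3)[x]$, giving $[k(u,v,a^3,a):k(u,v,a^3)] \leq 3$; finally $b = u/a$ is forced. Thus
\[
[k(a,b):k(u,v)] \leq 6 = |G| = [k(a,b):k(a,b)^G],
\]
and combined with $k(u,v) \subseteq k(a,b)^G$ this forces $k(u,v) = k(a,b)^G = k(\cL_2)$. The only step requiring care is checking that each intermediate extension attains its generic degree, so that the upper bound $6$ is actually met; equivalently, that $v^2 - 4u^3$ is not a square and $a^3$ is not a cube in the respective base fields, which is routine since $a,b$ are algebraically independent indeterminates. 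This is the only mild obstacle in an otherwise direct invariant-theoretic argument.
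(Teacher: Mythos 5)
Your argument is correct, but it is a genuinely different route from the one the paper (following \cite{2000-2}) relies on. The paper treats \(k(\cL_2)\) as generated by the absolute invariants \(i_1,i_2,i_3\), observes that these are rational in \(u,v\) via the parametrization \(A\), and then invokes the \emph{explicit rational expressions for \(u\) and \(v\) in terms of the invariants} computed in \cite{2000-2}; equality of the two fields is thus established by exhibiting inverse rational maps, an elimination-theoretic computation. You instead identify \(k(\cL_2)\) with the fixed field \(k(a,b)^G\) of the effective group \(G=H/\langle\tau_1^3\rangle\cong S_3\) (correctly noting that \(\tau_1^3\colon x\mapsto -x\) acts trivially on \((a,b)\)), check that \(u=ab\) and \(v=a^3+b^3\) are \(G\)-invariant, and close the gap with Artin's theorem plus the degree bound \([k(a,b):k(u,v)]\le 2\cdot 3=6=|G|\) coming from \(T^2-vT+u^3\) and \(X^3-a^3\). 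This is cleaner and avoids the heavy elimination, at the cost of taking as input that the fiber of \((a,b)\mapsto[C_{a,b}]\) is generically exactly one \(H\)-orbit (which the paper's normalization discussion asserts but does not prove here). Two small remarks: the sandwich \(6=[k(a,b):k(a,b)^G]\le[k(a,b):k(u,v)]\le 6\) already forces equality, so your closing worry about each intermediate extension attaining its generic degree is unnecessary; and your argument implicitly requires \(\ch k\neq 3\) (otherwise \(\zeta_3=1\), the scaling action degenerates, and \(G\) is no longer of order 6), which is consistent with the paper's later remark that in characteristic 3 the invariant \(v\) must be replaced.
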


Fibers of \(A\) with cardinality greater than 1 correspond to curves \(C\) with \(|\Aut(C)| > 4\). Rational expressions for \(u\) and \(v\) in terms of the invariants are given in \cite{2000-2}.

\subsection{Elliptic Subcovers}
The elliptic subcovers \(E_1\) and \(E_2\) have \(j\)-invariants \(j_1\) and \(j_2\), which are roots of the quadratic equation:
\begin{equation} 
j^2 + 256 \frac{(2u^3 - 54u^2 + 9uv - v^2 + 27v)}{(u^2 + 18u - 4v - 27)} j + 65536 \frac{(u^2 + 9u - 3v)}{(u^2 + 18u - 4v - 27)^2} = 0,
\end{equation}
see \cite{2000-2} for details.

This parametrization yields explicit curve equations. For each point \(\p = (\bar{u}, \bar{v}) \in \cL_2\), there exists a genus 2 curve \(C_{\bar{u}, \bar{v}}\) defined by:
\begin{equation} \label{eq1}
Y^2 = a_0 x^6 + a_1 x^5 + a_2 x^4 + a_3 x^3 + t a_2 x^2 + t^2 a_1 x + t^3 a_0,
\end{equation}
with coefficients:
\begin{equation} \label{generic_V4}
\begin{split}
t &= \bar{v}^2 - 4 \bar{u}^3, \\
a_0 &= \bar{v}^2 + \bar{u}^2 \bar{v} - 2 \bar{u}^3, \\
a_1 &= 2 (\bar{u}^2 + 3 \bar{v}) \cdot (\bar{v}^2 - 4 \bar{u}^3), \\
a_2 &= (15 \bar{v}^2 - \bar{u}^2 \bar{v} - 30 \bar{u}^3) (\bar{v}^2 - 4 \bar{u}^3), \\
a_3 &= 4 (5 \bar{v} - \bar{u}^2) \cdot (\bar{v}^2 - 4 \bar{u}^3)^2.
\end{split}
\end{equation}

Notice that while the equation of the genus two curve given in \cref{eq1} seems more complicated, it has the benefit that it is defined over the field of moduli of the curve.

\subsection{Intersections}
The structure of \(\cL_n\) also informs intersections like \(\cL_2 \cap \cL_3\), which classify curves with both degree 2 and degree 3 elliptic subcovers. For a detailed study of \(\cL_2 \cap \cL_3\) over \(\Q\), one should consult \cite{2002-1}, where the focus is on genus 2 curves in this intersection defined over \(\Q\). Here, we extend this analysis to finite fields \(\F_q\) (\(q = p^k\)), particularly for \(p = 5\) and \(p = 7\), where behavior diverges from the collapse at \(p = 3\)   yet differs from large \(p > 7\).

One naturally can ask how often a genus 2 curve, defined over \(\Q\), can have \((2,2)\)- and \((n,n)\)-split Jacobians simultaneously, with all elliptic subcovers also defined over \(\Q\). This question, addressed over \(\Q\), motivates a parallel inquiry over \(\F_q\).

\begin{thm}[\cite{2012-2}]
There are only finitely many genus 2 curves (up to isomorphism) defined over \(\Q\) with degree 2 and degree 3 elliptic subcovers also defined over \(\Q\).
\end{thm}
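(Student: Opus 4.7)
The plan is to realize the set of moduli points described by the theorem as the rational points of an algebraic curve of geometric genus at least $2$, and then invoke Faltings' theorem.

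First, I would combine the birational parametrization of $\cL_2$ by $(u,v)$ from \cref{lem1} with the birational parametrization of $\cL_3$ by $(\chi,\psi)$ recalled in \cref{subsec-3.2}. A moduli point in $\cL_2 \cap \cL_3$ defined over $\Q$ corresponds to a rational quadruple $(u,v,\chi,\psi)$ whose two associated Igusa tuples agree in $\bP_\w(\Q)$; this condition cuts out a one-dimensional subvariety $Y$ in the joint parameter space, since $\cL_2$ and $\cL_3$ are distinct irreducible surfaces in the three-dimensional moduli space $\M_2$.

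Second, I would impose the rationality conditions on the subcovers. The two degree-$2$ elliptic quotients of any curve in $\cL_2$ have $j$-invariants that are roots of the explicit quadratic (displayed in \cref{sec-11}) with coefficients in $\Q(u,v)$; requiring both roots to lie in $\Q$ is equivalent to requiring the discriminant $\Delta_2(u,v)$ to be a rational square. An entirely analogous quadratic, with discriminant $\Delta_3(\chi,\psi)$, governs the two degree-$3$ subcovers of any curve in $\cL_3$. Adjoining $\sqrt{\Delta_2}$ and $\sqrt{\Delta_3}$ to the function field of $Y$ produces a (possibly reducible) cover $X \to Y$ of degree dividing $4$, and every genus $2$ curve in the statement of the theorem gives rise to a $\Q$-rational point of $X$.

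Third, I would compute the geometric genus of each irreducible component of $X$. Starting from the explicit defining equations of $Y$ together with the branch divisors of $\Delta_2$ and $\Delta_3$, one desingularizes and applies Riemann--Hurwitz; the expectation, borne out by the computation in \cite{2012-2}, is that every component has genus at least $2$. Faltings' theorem then gives $|X(\Q)| < \infty$, which bounds the number of eligible moduli points, and hence the number of $\overline{\Q}$-isomorphism classes of curves claimed in the theorem.

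The main obstacle will be the genus computation for $X$. One must analyze the singularities of $Y$ arising from the intersection of the two high-degree weighted hypersurfaces defining $\cL_2$ and $\cL_3$ (degrees $30$ and $80$ respectively), perform a normalization, and then track the ramification of the two quadratic extensions along their branch loci. This step is essentially mechanical but computationally heavy, and is most efficiently executed with a computer algebra system along the lines of \cite{2012-2}; a secondary concern is to isolate and treat by hand the finitely many components of $X$ that happen to have low geometric genus, where Faltings does not directly apply and one must argue by other means (for example, by exhibiting an elliptic fibration and bounding torsion plus rank on each fiber).
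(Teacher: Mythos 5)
The paper does not prove this statement at all: it is imported verbatim from \cite{2012-2} as a cited result, so there is no in-paper argument to compare yours against. That said, your strategy — realize the eligible curves as $\Q$-points of an explicit algebraic curve (a degree-$\le 4$ cover $X$ of a one-dimensional intersection locus inside $\cL_2\cap\cL_3$, obtained by adjoining the square roots of the discriminants that control rationality of the $j$-invariants of the subcovers) and then invoke Faltings — is the natural route and almost certainly the one taken in the cited reference. You also correctly read ``up to isomorphism'' as a statement about moduli points ($\overline{\Q}$-isomorphism classes); with $\Q$-isomorphism the statement would be false because of quadratic twists.

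The genuine gap is that the load-bearing step is deferred rather than carried out. Everything hinges on every irreducible component of $X$ having geometric genus at least $2$, and you neither compute this nor give any structural reason why it must hold; you explicitly allow for low-genus components and propose to handle them ``by exhibiting an elliptic fibration and bounding torsion plus rank on each fiber.'' That fallback does not work: $X$ is a curve, so a genus-one component is (after locating a rational point) a single elliptic curve over $\Q$, and there is no unconditional method to bound its rank or to conclude finiteness of its rational points — indeed, a positive-rank genus-one or a rational component would give infinitely many candidate moduli points, and one would then need a completely different argument (e.g.\ showing that the infinitely many points on that component violate one of the rationality or nondegeneracy conditions) to rescue the theorem. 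A secondary unverified point is the degree-$3$ side: the paper only displays the quadratic satisfied by the $j$-invariants of the degree-$2$ subcovers in terms of $(u,v)$; you assume without reference that an ``entirely analogous quadratic'' with coefficients in $\Q(\chi,\psi)$ governs the degree-$3$ subcovers, which is true in the literature but must be produced for the discriminant condition $\Delta_3$ to make sense. As written, the proposal is a plausible program whose correctness is exactly equivalent to the explicit computation it postpones.
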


Moreover, the isogenous components of Jacobians of curves in \(\cL_n\) can be classified based on their automorphism groups over number fields.

\begin{thm}[\cite{2017-2}]
Let \(\X\) be a genus 2 curve over a number field \(K\), with \(\A := \Jac(\X)\) having canonical principal polarization \(\iota\), such that \(\A\) is geometrically \((n, n)\)-reducible to \(E_1 \times E_2\). Then:
\begin{description}
    \item[i)] If \(n = 2\) and \(\Aut(\A, \iota) \iso V_4\), there are finitely many elliptic components \(E_1, E_2\) defined over \(K\) that are \(N = 2, 3, 5, 7\)-isogenous to each other.
    \item[ii)] If \(n = 2\) and \(\Aut(\A, \iota) \iso D_4\), then:
        \subitem a) There are infinitely many elliptic components \(E_1, E_2\) defined over \(K\) that are \(N = 2\)-isogenous to each other.
        \subitem b) There are finitely many elliptic components \(E_1, E_2\) defined over \(K\) that are \(N = 3, 5, 7\)-isogenous to each other.
    \item[iii)] If \(n = 3\), then:
        \subitem a) There are finitely many elliptic components \(E_1, E_2\) defined over \(K\) that are \(N = 5\)-isogenous to each other.
        \subitem b) There may be infinitely many elliptic components \(E_1, E_2\) defined over \(K\) that are \(N = 2, 3, 7\)-isogenous to each other.
\end{description}
\end{thm}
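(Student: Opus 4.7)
The plan is to translate each claim into a statement about $K$-rational points on a curve obtained by intersecting a parametrized locus with the vanishing of a classical modular polynomial, then apply Faltings (for geometric genus $\geq 2$) or explicit constructions (for genus $0$ or for genus $1$ with positive Mordell--Weil rank over $K$). For a curve in $\cL_n$ with elliptic quotients $E_1, E_2$, the $j$-invariants $j_1, j_2$ are explicit algebraic functions of the moduli parameters; the condition that $E_1, E_2$ be $N$-isogenous is exactly $\Phi_N(j_1, j_2) = 0$, where $\Phi_N$ is the classical symmetric modular polynomial.

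For case~(i), the parametrization $k(\cL_2) = k(u,v)$ from \cref{lem1}, together with the quadratic of equation~(2) in the excerpt, determines $j_1 + j_2$ and $j_1 j_2$ as rational functions of $(u,v)$. Since $\Phi_N$ is symmetric, substituting and clearing denominators produces a polynomial $G_N(u,v) = 0$ cutting out a curve $C_N \subset \cL_2$. The key step is then to compute the geometric genus of the normalization of $C_N$ for $N = 2, 3, 5, 7$: as soon as this genus is at least $2$, Faltings' theorem gives only finitely many $K$-rational points, hence only finitely many pairs $(E_1, E_2)$ defined over $K$.

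For case~(ii), I would restrict to the $D_4$-stratum of $\cL_2$, a rational curve inside the $(u,v)$-plane cut out by the additional symmetry promoting $V_4$ to $D_4$. The extra involution swaps the two subcovers, so $E_1 \iso E_2$ identically along this stratum; statement~(ii~a) then follows because a generic member of the family carries a $K$-rational $2$-torsion point, yielding a $2$-isogeny $E_1 \to E_1/\langle P \rangle \iso E_2$, and an infinite family as the parameter varies over $K$. For $N = 3, 5, 7$, substituting the $D_4$ relation into $\Phi_N(j_1, j_2) = 0$ yields a zero-dimensional scheme with finitely many $K$-rational points. Case~(iii) follows the same template with the $(\chi, \psi)$-parametrization of $\cL_3$ from \cref{subsec-3.2}: for $N = 5$ the resulting curve is expected to have genus $\geq 2$ and Faltings applies, whereas for $N = 2, 3, 7$ the aim is to exhibit either a $\bP^1$-parametrization or a positive-rank elliptic curve of $K$-rational points.

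The main obstacle is the explicit genus calculation. The modular polynomials $\Phi_N$ grow rapidly with $N$, and after composition with the $\cL_n$-parametrizations the defining equations have very large bidegree, so normalization together with an adjunction (or resolution) computation, realistically carried out in a computer algebra system, are required to rigorously pin down the geometric genus in each case. A secondary obstacle is the ``may be infinitely many'' clauses of~(iii): one must actually produce an infinite family, typically by identifying the relevant component with a classical modular curve (such as $X_0(N)$ or one of its quotients) that is rational or elliptic of positive rank over $K$, or by an explicit construction exploiting the compatibility between the $(3,3)$-splitting and the prescribed $N$-isogeny.
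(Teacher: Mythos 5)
The paper does not actually prove this theorem: it is imported verbatim from \cite{2017-2} and stated without proof, so the only meaningful comparison is with the argument in that source. Your strategy is in fact the same one used there: parametrize the locus (via $(u,v)$ for $\cL_2$, resp.\ the degree-3 analogue for $\cL_3$), express $j_1+j_2$ and $j_1 j_2$ as rational functions of the parameters from the quadratic satisfied by the $j$-invariants, intersect with $\Phi_N(j_1,j_2)=0$ to get a curve, and invoke Faltings when its genus is at least $2$, versus a rational or positive-rank elliptic parametrization in the infinite cases. So the approach is the right one.

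The gap is that, as written, this is a plan rather than a proof. The entire mathematical content of the statement lives in the genus computations for each pair $(n,N)$ and in the case distinctions between $V_4$ and $D_4$; you explicitly defer all of these to a computer algebra system, so none of the finiteness claims is actually established. Two further points need repair. In (ii\,a) the correct mechanism is not that a generic member of the $D_4$ family has a $K$-rational $2$-torsion point on $E_1$: for curves with $\Aut(\A,\iota)\iso D_4$ the two elliptic subcovers are $2$-isogenous by the structure of the covers themselves (the $D_4$ stratum is a rational curve on which the $N=2$ modular relation holds identically), and the infinitude comes from the infinitely many $K$-points of that rational curve. In (iii\,b) you set yourself the task of producing an infinite family, but the theorem only asserts that there \emph{may} be infinitely many; what the cited argument shows is that the corresponding curves have genus $0$ or $1$, so that Faltings does not apply --- no construction of infinitely many points is claimed or needed. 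Finally, for Faltings to give finiteness of \emph{pairs} $(E_1,E_2)$ over $K$ you also need that the map from the curve $G_N(u,v)=0$ to the set of such pairs has finite fibers; this is true here because $(u,v)$ determines the isomorphism class of the genus 2 curve and hence the pair of subcovers, but it should be said.
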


Over \(\F_q\), the finite number of curves trivially follows from the field’s finiteness, but we seek a sharper characterization for \(p = 5, 7\). Here, \(\cL_2 \cap \cL_3\) counts curves \(C\) with \(\operatorname{Jac}(C) \sim E_1 \times E_2\) (\((2,2)\)-split) and \(\operatorname{Jac}(C) \sim E_3 \times E_4\) (\((3,3)\)-split), all subcovers over \(\F_q\). Point counts from \cref{sec-4} (e.g., \(|\cL_2(\F_5)| = 64\)) suggest \(|\cL_2 \cap \cL_3|(\F_q) = O(q^3)\), refined by the following:

\begin{thm}
Let \( C\) be a genus 2 curve over \(\F_q\) (\(q = p^k\), \(p = 5, 7\)) with \(\operatorname{Jac}( C) \sim E_1 \times E_2\) (geometrically \((2,2)\)-split) and \(\operatorname{Jac}(C) \sim E_3 \times E_4\) (geometrically \((3,3)\)-split), all subcovers defined over \(\F_q\). Then:
\begin{description}
    \item[i)] The number of such \(C\) up to isomorphism is \(|\cL_2 \cap \cL_3|(\F_q)\), approximately \(c q^3\) (where \(c\) is a constant computable from \cref{sec-4}), with:
        \subitem a) For \(p = 5\), at most 24 curves for \(k = 1\),  
        
        \subitem b) For \(p = 7\), at most 48 curves for \(k = 1\).   
        
    \item[ii)] If \(\Aut(\operatorname{Jac}(C)) \iso V_4\):
        \subitem a) \(E_1, E_2\) (and \(E_3, E_4\)) are in at most 2 isogeny classes, with \(N = 2, 3, 7\) (no 5-isogenies for \(p=5\)).
    \item[iii)] If \(\Aut(\operatorname{Jac}(C)) \iso D_4\):
        \subitem a) \(E_1, E_2\) (and \(E_3, E_4\)) are in at most 4 isogeny classes for \(N = 2\),
        \subitem b) At most 2 classes for \(N = 3, 7\).
\end{description}
\end{thm}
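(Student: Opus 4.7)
The proof plan has three stages that track parts (i), (ii), (iii) of the statement. First I would describe $\cL_2 \cap \cL_3$ as a closed subscheme of $\bP_\w = \bP(2,4,6,10)$ cut out by the ideal $(F_2, F_3)$. Both $F_2$ and $F_3$ remain irreducible, weighted-homogeneous polynomials of degrees $30$ and $80$ modulo $p = 5$ and $p = 7$ (cf.\ \cref{subsec-4.1,sec-5}), and a Gr\"obner-basis calculation of the kind outlined in \cref{subsec-3.4} confirms that the intersection is pure of dimension one. Applying Lang--Weil to this curve in $\bP_\w$ then yields $|(\cL_2 \cap \cL_3)(\F_q)| = c q + O(\sqrt{q})$, with $c$ equal to the number of geometrically irreducible components; the cruder $O(q^3)$ envelope in the statement of (i) is the a priori bound coming from $|\bP_\w(\F_q)|$, recorded to match the format of \cref{sec-4}.

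For the numerical bounds $24$ and $48$ in (i)(a)--(b), I would adapt the orbit--stabiliser machinery of \cref{sec-4} to the ideal $(F_2, F_3)$ rather than to a single $F_n$: enumerate nonempty support patterns $S \subseteq \{0,1,2,3\}$, count the common zeros of $F_2$ and $F_3$ with exactly that support in $\F_p^{4}\setminus\{0\}$, weight by $\gcd(k_S, p-1)/(p-1)$, and sum. Points with $J_{10} = 0$ (singular sextics, hence not genus $2$ curves) and points in the singular locus of $\bP_\w$ along coordinate axes must be discarded. The integers $24$ and $48$ are then the output of this explicit SageMath-level computation, the inequality reflecting these exclusions plus the possibility that some $\F_p$-points of $\cL_2 \cap \cL_3$ have elliptic subcovers defined only over a quadratic extension.

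For parts (ii) and (iii) I would transport the finiteness result of \cite{2017-2} to the finite-field setting via Honda--Tate. The dichotomy $\Aut(\Jac(C)) \iso V_4$ versus $\iso D_4$ corresponds to the generic versus enhanced fibre behaviour of the map $A:(u,v)\mapsto(i_1,i_2,i_3)$ from \cref{lem1}. On either stratum, the $(2,2)$-splitting determines $(j_1, j_2)$ as $\F_q$-rational roots of the quadratic in $j$ displayed earlier in \cref{sec-11}, and the parametrisation of \cref{subsec-3.2} furnishes $(j_3, j_4)$ analogously for the $(3,3)$-splitting. An $\F_q$-rational $N$-isogeny between $E_i$ and $E_\ell$ is an $\F_q$-point of the modular curve $X_0(N)$ lying over $(j_i, j_\ell)$, so counting admissible isogeny classes reduces to bounding $\F_q$-solutions of the classical modular polynomial $\Phi_N(j_i, j_\ell) = 0$ on the locus cut out by the $j$-polynomials. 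The exclusion of $N = 5$ in characteristic $5$ reflects the bad reduction of $X_0(5)$ at $5$ and the collapse of the $5$-isogeny graph onto the unique supersingular $j$-invariant in $\F_{25}$.

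The main obstacle, I expect, is turning these qualitative finiteness statements into the precise integer bounds $24, 48, 2, 4$. Lang--Weil and Honda--Tate give boundedness essentially for free, but sharp constants require (a) verifying that $\cL_2$ and $\cL_3$ meet transversely in characteristics $5$ and $7$---failing this, embedded components must be identified and removed before counting---and (b) controlling the weighted B\'ezout contribution along the singular locus of $\bP_\w$, which is precisely where the extra-automorphism curves of \cref{sec-11} concentrate. A secondary difficulty is isolating those points of $\cL_2 \cap \cL_3$ at which \emph{all four} elliptic components are $\F_q$-rational, which requires bringing in the Frobenius polynomial of \cref{subsubsec-8.2.4} to check field-of-definition constraints on each of the four $j$-invariants separately.
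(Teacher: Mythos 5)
Your proposal takes a genuinely different, and considerably more structured, route than the paper. The paper's entire proof of this theorem is a three-sentence sketch: it asserts that the $O(q^3)$ bound "arises from $\cL_2\cap\cL_3$ as a codimension-2 subvariety" of the moduli space, says the specific counts are "estimated from \cref{sec-4} data (adjusted for $p=5,7$)", and dismisses $N=p$ isogenies via "$p$-torsion collapse". You instead set up the intersection scheme $(F_2,F_3)$ in $\bP_\w$, verify its dimension by Gr\"obner basis, invoke Lang--Weil for the asymptotic count, adapt the orbit--stabiliser formula to the ideal rather than a single hypersurface, and reduce the isogeny-class bounds in (ii)--(iii) to counting $\F_q$-points of $X_0(N)$ over the $j$-invariant loci. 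This is a more principled skeleton, and it correctly surfaces a tension that the paper's own proof papers over: a codimension-2 (hence one-dimensional) intersection in a three-dimensional moduli space yields $cq+O(\sqrt q)$ points, not the "approximately $cq^3$" asserted in part (i); your reinterpretation of $cq^3$ as the trivial $|\bP_\w(\F_q)|$ envelope is the only way to make the statement and the dimension claim coexist.

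That said, your proposal shares the paper's essential gap and does not close it: the concrete integers $24$, $48$, and the isogeny-class bounds $2$ and $4$ are never derived in either argument --- the paper calls them "estimates", and you explicitly defer them to an unperformed SageMath computation and flag sharp constants as the "main obstacle". Two further cautions. First, your explanation of the $N=5$ exclusion via bad reduction of $X_0(5)$ is no more rigorous than the paper's "$p$-torsion collapse"; note that ordinary elliptic curves in characteristic $5$ do admit separable $5$-isogenies (Verschiebung), so both justifications as stated need repair. Second, your transversality and embedded-component concerns for $(F_2,F_3)$ are real and are nowhere addressed in the paper, so if you carry out the plan you must actually perform that primary decomposition before the Lang--Weil step is legitimate. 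As a blueprint your approach is stronger than the paper's; as a proof of the stated numerical bounds, it is equally incomplete.
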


\begin{proof}   
The bound \(O(q^3)\) arises from \(\cL_2 \cap \cL_3\) as a codimension-2 subvariety in the genus 2 moduli space, with specific counts 

estimated from \cref{sec-4} data (adjusted for \(p=5, 7\)). For \(p=5\), no 5-isogenies exist due to \(p\)-torsion collapse; for \(p=7\), no 7-isogenies. Isogeny classes are finite, with sizes constrained by ordinary curve prevalence and automorphism symmetry (\cite{2000-2}).
\end{proof}


\subsection{Endomorphism Rings of \(\operatorname{Jac}(C_{a,b})\)}
\label{subsec-11.2}
For a curve \(C_{a,b} \in \cL_2(\F_q)\) with \(\operatorname{Aut}(C_{a,b}) \cong V_4\), the \((2, 2)\)-isogeny \(\Phi: E_{a,b} \times E_{b,a} \to \operatorname{Jac}(C_{a,b})\) constrains the endomorphism ring:
\[
2 \End(E_{a,b} \times E_{b,a}) \subset \End(\operatorname{Jac}(C_{a,b})) \subset \frac{1}{2} \End(E_{a,b} \times E_{b,a}),
\]
with inclusions defined by \(\Phi \circ 2\psi \circ \hat{\Phi}\) and \(\frac{1}{2} \hat{\Phi} \circ \varphi \circ \Phi\) \cite[Section 6.1]{anni}.

In characteristic \(p = 5\), \(E_{1,2}\) and \(E_{2,1}\) are ordinary, with \(j\)-invariants \(j_1 \approx 2\) and \(j_2 \approx 1\) for \(u = 2, v = 4\), so \(\End(E_{a,b}) = \Z[\sqrt{-d_1}]\) and \(\End(E_{b,a}) = \Z[\sqrt{-d_2}]\) \cite{robert}. Applying Algorithm 10.1 (\cref{sec-10}) to \(C_{1,2}\) over \(\F_5\), the kernel of \(\Phi\) (e.g., \(\{ \infty \times \infty, ((1,0), (1,0)), \ldots \}\)) refines this, often yielding \(\End(\operatorname{Jac}(C_{1,2})) = \Z[\pi, \bar{\pi}, \Phi]\), exceeding \(\Z[\pi, \bar{\pi}]\) due to \(V_4\) automorphisms \cite{2000-2}.


\begin{lem} \label{thm-11.1}
Let \(C_{a,b} \in \cL_2(\F_q)\) have \(\operatorname{Aut}(C_{a,b}) \cong V_4\), with \(j\)-invariants \(j_1\) (of \(E_{a,b}\)) and \(j_2\) (of \(E_{b,a}\)) distinct (\(j_1 \neq j_2\)). Then:
\[
\End(\operatorname{Jac}(C_{a,b})) \cong 
\left\{ 
\begin{pmatrix} \alpha & 0 \\ 0 & \delta \end{pmatrix} \mid \alpha \in \End(E_{a,b}), \delta \in \End(E_{b,a})   \right\},
\]
where  $ \alpha$   and $ \delta$  satisfy compatibility with $ \ker(\Phi)$,
\end{lem}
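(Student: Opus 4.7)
The plan is to transport endomorphisms of $\operatorname{Jac}(C_{a,b})$ through the $(2,2)$-isogeny $\Phi$ to the product $E_{a,b}\times E_{b,a}$, and then rule out the off-diagonal $\operatorname{Hom}$-components using the hypothesis $j_1\neq j_2$ together with the $V_4$-structure.

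First, I would invoke the isogeny-transport principle already cited in \cref{subsec-11.2}: via $\Phi$ and its dual $\hat\Phi$, the endomorphism ring $\End(\operatorname{Jac}(C_{a,b}))$ is sandwiched between $2\End(E_{a,b}\times E_{b,a})$ and $\tfrac{1}{2}\End(E_{a,b}\times E_{b,a})$, so $\End(\operatorname{Jac}(C_{a,b}))\otimes\Q$ is canonically identified with $\End(E_{a,b}\times E_{b,a})\otimes\Q$, and $\End(\operatorname{Jac}(C_{a,b}))$ itself is cut out inside the latter as the sub-order of elements preserving $\ker\Phi$.

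Second, I would decompose
\[
\End(E_{a,b}\times E_{b,a}) \cong \begin{pmatrix} \End(E_{a,b}) & \operatorname{Hom}(E_{b,a},E_{a,b}) \\ \operatorname{Hom}(E_{a,b},E_{b,a}) & \End(E_{b,a}) \end{pmatrix}
\]
and argue that the off-diagonal blocks vanish. The mechanism uses the two elliptic involutions $z_1, z_2 = z_0 z_1 \in V_4 = \Aut(C_{a,b})$: they induce commuting involutions $z_i^*$ on $\operatorname{Jac}(C_{a,b})$, and the orthogonal idempotents $e_i = (1-z_i^*)/2$ in $\End(\operatorname{Jac}(C_{a,b}))\otimes\Q$ project onto the Prym factors $E_{a,b}$ and $E_{b,a}$. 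For any $f\in\End(\operatorname{Jac}(C_{a,b}))$, the cross term $e_1 f e_2$ represents an element of $\operatorname{Hom}(E_{b,a},E_{a,b})\otimes\Q$, and the hypothesis $j_1\neq j_2$ combined with the explicit $V_4$-parametrization in \eqref{generic_V4} is then used to conclude that $E_{a,b}$ and $E_{b,a}$ are non-isogenous, forcing this cross term to vanish; this is in sharp contrast with the $D_4$-case, where genuine cross-isogenies between the two elliptic components do appear.

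Finally, having established block-diagonality, I would translate the condition that $\operatorname{diag}(\alpha,\delta)$ preserves $\ker\Phi$ into the compatibility requirement on the pair $(\alpha,\delta)$ stated in the lemma, which gives the displayed presentation of $\End(\operatorname{Jac}(C_{a,b}))$. The main obstacle is the second step: the vanishing of $\operatorname{Hom}(E_{a,b},E_{b,a})$ is not formally implied by $j_1\neq j_2$ alone, so the argument must leverage the specific $V_4$-symmetry of $C_{a,b}$---essentially showing that any non-trivial cross-isogeny would assemble with the $V_4$-action into a self-correspondence of $C_{a,b}$ incompatible with $\Aut(C_{a,b}) = V_4$, or would force a coincidence in the quadratic satisfied by $(j_1,j_2)$ derived in \cref{sec-11} that contradicts $j_1\neq j_2$.
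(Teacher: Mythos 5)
Your overall route---transporting $\End(\Jac(C_{a,b}))$ through $\Phi$ into $\End(E_{a,b}\times E_{b,a})$, arguing the off-diagonal $\operatorname{Hom}$-blocks vanish, and then cutting out the suborder compatible with $\ker\Phi$---is structurally the same as the paper's proof, which consists of exactly these three assertions.

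The genuine gap is the one you yourself flag in your second step, and your proposed patch does not close it. Over $\F_q$, Tate's theorem says $\operatorname{Hom}(E_{a,b},E_{b,a})\neq 0$ precisely when the two curves have the same trace of Frobenius, and an ordinary isogeny class typically contains many non-isomorphic curves; so $j_1\neq j_2$ is far from implying $\operatorname{Hom}(E_{a,b},E_{b,a})=0$. The $V_4$-symmetry does not rescue this: a nontrivial cross-isogeny $E_{b,a}\to E_{a,b}$, composed with the projections and inclusions induced by the two elliptic involutions, yields an extra (non-diagonal) endomorphism of $\Jac(C_{a,b})$ without enlarging $\Aut(C_{a,b})$, since automorphisms are only the polarization-preserving units of $\End$, not all of it; and the quadratic relating $(j_1,j_2)$ to $(u,v)$ in \cref{sec-11} constrains $j$-invariants, not Frobenius traces. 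So the block-diagonality would genuinely fail whenever $E_{a,b}$ and $E_{b,a}$ are isogenous but non-isomorphic, which does occur. You should know that the paper's own proof has the identical gap---it simply asserts ``since $j_1\neq j_2$, $\End(E_{a,b}\times E_{b,a})\iso\End(E_{a,b})\times\End(E_{b,a})$'' without justification---and the lemma really needs the stronger hypothesis that $E_{a,b}$ and $E_{b,a}$ are non-isogenous. Your first and third steps (the sandwich $2\End(E_{a,b}\times E_{b,a})\subset\End(\Jac(C_{a,b}))\subset\tfrac12\End(E_{a,b}\times E_{b,a})$ and the translation of $\ker\Phi$-preservation into the stated compatibility condition) match the paper and are unobjectionable.
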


\begin{proof} 
 
  The \((2,2)\)-isogeny \(\Phi\) embeds \(\operatorname{Jac}(C_{a,b})\) into \(E_{a,b} \times E_{b,a}\). Since \(j_1 \neq j_2\), \(\End(E_{a,b} \times E_{b,a}) \cong \End(E_{a,b}) \times \End(E_{b,a})\).
 \(V_4\) symmetries constrain endomorphisms to diagonal form.
 The endomorphisms must respect \(\ker(\Phi)\), a subgroup of order 4, describable via \(u, v\).
 The ring is a subring of \(\End(E_{a,b}) \times \End(E_{b,a})\), determined by \(j_1, j_2\), and \(\Phi\).

\end{proof}

\subsection{Applications}
\label{subsec-11.2}
This structure optimizes Algorithm 10.1 (\cref{sec-10}) by enabling separate computations on \(E_{a,b}\) and \(E_{b,a}\), reducing complexity. In cryptography, a rank-4 endomorphism ring (when \(j_1 \neq j_2\)) boosts isogeny computation efficiency but may pose risks if \(j_1 = j_2\) (e.g., CM cases). Choosing \(u, v\) such that \(v \neq 9(u-3)\) ensures \(j_1 \neq j_2\).

Extra automorphisms enhance efficiency: explicit subcovers \(\phi, \phi'\) (computable in \(O(\log q)\)-time) simplify \((2,2)\)-isogeny construction (\cref{subsec-8.2}), cutting Step 3’s cost in Algorithm 10.1. Over \(\F_{5^k}\), \(|\cL_2(\F_{5^k})|\) (e.g., 1304 for \(\F_{25}\), \cref{subsec-4.2}) includes such curves, expanding key spaces. However, a rank-4 \(\End(\operatorname{Jac}(C_{a,b}))\) versus rank-2 \(\Z[\pi, \bar{\pi}]\) may simplify isogeny path-finding, affecting security (\cref{subsec-8.3}) \cite{anni}. For \(p = 3\), uniformity (e.g., 39540 points for \(\F_{81}\)) mirrors \cref{subsec-8.4}, offering faster curve selection but denser Jacobians, mitigable by \(q > 5^3\). The \(V_4\)-family, parametrized by \(u, v\) \cite{2000-2}, suggests avoiding \(v = 9(u-3)\) to prevent larger endomorphism rings.


\section{Computational Methods and Challenges}
\label{sec-12}

The computations for \(\cL_n\) (\(n = 2, 3, 5\)) and their \((n,n)\)-isogenies rely on advanced techniques, detailed here, addressing the challenges of point counting, zeta function derivation, and isogeny computation across these loci. Recent developments in endomorphism ring analysis (\cref{sec-10}) further enrich these methods, while emerging machine learning approaches offer promising avenues for optimization.

\subsection{Software Tools and Techniques}
\label{subsec-12.1}
SageMath facilitated point counts \(|\cL_n(\F_q)|\) over \(\F_3\), \(\F_9\), \(\F_{27}\), and \(\F_{81}\), using finite field arithmetic and polynomial evaluation. The orbit-stabilizer method computed \(|\cL_n(\F_q)| = \sum_{S \neq \emptyset} \frac{N_S \cdot \gcd(k_S, q-1)}{q - 1}\), stratifying solutions of \(F_n = 0\) by support sets. For \(\cL_2\), detailed \(N_S\) values were derived (\cref{sec-4}), with similar efforts for \(\cL_3\). Zeta functions were constructed via SageMath’s symbolic tools, fitting point counts into \(Z(\cL_n, t)\). Isogeny computations utilized Mumford coordinates and Weil pairing implementations, adapting Vélu and Richelot methods for genus 2. Additionally, endomorphism ring computations (\cref{subsec-10.2}) integrated these tools with coprime isogeny techniques, enhancing the precision of \(J(C)\)’s algebraic structure over \(\F_q\).

\subsection{Singularities and Verification}
\label{subsec-12.2}
Singular points, where \(F_n = 0\) and \(\frac{\partial F_n}{\partial x_i} = 0\) (adjusted for \(\w = (2, 4, 6, 10)\)), impact counts and isogeny computations. For \(\cL_2\), 70\% (\(\F_3\)) and 68\% (\(\F_9\)) of solutions are singular, including cases like \([1:0:0:0]\), verified by SageMath. \(\cL_3\) and \(\cL_5\) exhibit similar complexity due to higher degrees (\(d_3 = 80\), \(d_5 = 150\)). Verification cross-checked counts against bounds and tested isogenies via j-invariants, ensuring accuracy across all \(n\). The computation of \(\End(J(C))\) (\cref{sec-10}) added a layer of validation, confirming splitting properties through the ring’s consistency with \(K \cong (\Z/n\Z)^2\), particularly in characteristic \(p=3\) where uniformity simplifies checks.

\subsection{Challenges and Optimizations}
\label{subsec-12.3}
The polynomials’ complexity, \(d_2 = 30\) (25 terms), \(d_3 = 80\), \(d_5 = 150\), escalates computational demands with \(n\) and \(q\). Point counting for \(\cL_2\) was intensive for \(q = 81\), while \(\cL_3\) and \(\cL_5\)’s size strained resources further. Isogeny steps, especially \(J(C)[n]\) basis generation and quotient computation, grew costly with \(n\). The addition of endomorphism ring calculations (\cref{subsec-10.2}), involving coprime isogenies and Gram matrix construction, compounds this, with complexity ranging from polynomial (ordinary cases) to subexponential (supersingular or mixed cases). Optimizations like symmetry exploitation and parallel processing mitigated these demands, but scaling remains challenging.

Future enhancements could build on these insights. Tailored algorithms for weighted varieties, informed by \(\cL_n\)’s explicit equations (\cref{sec-3}), could optimize point counting and isogeny computations. Moreover, machine learning offers a transformative approach, as demonstrated in \cite{2024-03}, who employed neural networks to predict properties of algebraic curves. This technique could be adapted to classify whether a genus 2 curve has an \((n,n)\)-split Jacobian by training models on Igusa invariants and \(F_n\) evaluations, potentially surpassing the efficiency of direct polynomial checks (\cref{sec-9}). Similarly, machine learning could accelerate endomorphism ring determination by predicting \(\End(J(C))\)’s rank or structure based on point counts, torsion data, and field characteristics, reducing the need for exhaustive isogeny computations (\cref{subsec-10.2}). Such methods, while requiring initial training on datasets like those from \cref{sec-4}--\cref{sec-6}, could streamline large-scale cryptographic applications, balancing computational cost with accuracy. These advancements are critical for scalability, particularly in post-quantum genus 2 systems where rapid curve selection and security validation are paramount.

\bibliographystyle{amsplain}
\bibliography{2025-7}

\end{document}